\newtheorem{theorem}{Theorem}[section]
\newtheorem{lemma}[theorem]{Lemma}
\newtheorem{corollary}[theorem]{Corollary}
\newtheorem{result}[theorem]{Result}
\newtheorem{conjecture}[theorem]{Conjecture}
\newtheorem{proposition}[theorem]{Proposition}
\theoremstyle{definition}
\newtheorem{remark}{Remark}
\def\PG{\mathrm{PG}} 
\def\AG{\mathrm{AG}} 
\def\F{\mathbb{F}}
\def\Tr{\mathrm{Tr}}
\def\la{\langle}
\def\ra{\rangle}
\def\ran{\rangle_{q^n}}
\def\S{\mathcal{S}}
\def\D{\mathcal{D}}
\def\A{\mathcal{A}}
\def\B{\mathcal{B}}
\title{The minimum size of a linear set}
\author{Jan De Beule \and Geertrui Van de Voorde}
\begin{document}
\date{}
\maketitle
\begin{abstract} 
In this paper, we first determine the minimum possible size of an $\F_q$-linear set of rank $k$ in $\PG(1,q^n)$. We obtain this result by relating it to the number of directions determined by a linearized polynomial whose domain is restricted to a subspace. We then use this result to find a lower bound on the number of points in an $\F_q$-linear set of rank $k$ in $\PG(2,q^n)$. In the case $k=n$, this confirms a conjecture by Sziklai in \cite{peter1}.
\end{abstract}



\section{Introduction}

Let $q=p^h$, $p$ prime, $h \geq 1$. The finite field of order $q$ will be denoted as $\F_q$. Let $f: \F_q \rightarrow \F_q$ be a function. The graph
of $f$ is the set of affine points $\{(x,f(x)) | x \in \F_q\}$. The following theorem expresses the state of the art
on the number of directions determined by this affine point set.

\begin{theorem}[\cite{Ball2003}]\label{th:graph1}
Let $f: \F_q \rightarrow \F_q$ be a function. Let $N$ be the number of directions determined by $f$. Let $s = p^e$ 
be maximal such that any line with a direction determined by $f$ that is incident with a point of the graph of 
$f$ is incident with a multiple of $s$ points of the graph of $f$. Then one of the following holds:
\begin{compactenum}[(i)]
\item $s = 1$ and $\frac{q+3}{2} \leq N \leq q+1$;
\item $\F_s$ is a subfield of $\F_q$ and $\frac{q}{s} + 1 \leq N \leq \frac{q-1}{s-1}$;
\item $s=q$ and $N=1$.
\end{compactenum}
Moreover, if $s > 2$, then the graph of $f$ is $\F_s$-linear.
\end{theorem}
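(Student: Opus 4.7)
The plan is to follow the Rédei-polynomial strategy pioneered by Blokhuis and Szőnyi. First I would associate to the graph $U = \{(x,f(x)) : x \in \F_q\}$ the polynomial
\[
R(X,Y) = \prod_{x \in \F_q}\bigl(X + f(x) - xY\bigr) = \sum_{j=0}^{q} r_j(Y)\, X^{q-j},
\]
with $r_0 \equiv 1$ and $\deg r_j \le j$. For an affine direction $m \in \F_q$, the map $x \mapsto f(x) - xm$ is a bijection of $\F_q$ precisely when $m$ is not determined by $U$, in which case $R(X,m) = X^q - X$. Hence for every non-determined $m \in \F_q$ one has $r_j(m)=0$ for $1 \le j \le q-2$, which already shows that $r_1,\dots,r_{q-2}$ are divisible by a polynomial whose degree is $(q+1-N)$ or $(q-N)$, depending on whether the direction $\infty$ is determined.

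Second, I would bring in the divisibility hypothesis. If every line with a determined direction meets $U$ in a multiple of $s=p^e$ points, then for each determined direction $m$ the factorisation of $R(X,m)$ groups into blocks of multiplicity $s$, and therefore $R(X,m)$ is an $s$-th power in $\F_q[X]$. Combined with the vanishing in the first step, this forces each $r_j$ to split as
\[
r_j(Y) = \bigl(\text{product of linear factors from non-determined directions}\bigr) \cdot g_j(Y)^{1/\text{something}},
\]
or, more usefully, it constrains the polynomial $r_j(Y)^{1/\gcd}$ to have small degree relative to the number of non-determined directions. Exploiting $\deg r_j \le j$ together with this $s$-th power structure is what produces the inequality $N \ge q/s + 1$ in case (ii) and $N \ge (q+3)/2$ in case (i), while the upper bound $N \le (q-1)/(s-1)$ is a counting inequality: the $(N-1)\cdot q$ point–line incidences on affine determined lines must all be multiples of $s$ and must account for the $q$ points of $U$ on each parallel class.

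Third, for the linearity conclusion when $s > 2$, I would argue that the vanishing of $r_1(Y)$ on the $q+1-N$ non-determined directions, combined with $\deg r_1 \le 1$ and the $p^e$-power structure of $r_j$ for small $j$, forces $f$ itself to be additive over $\F_s$. Concretely, one shows $r_j \in \F_s[Y]$ for a suitable range of $j$ by extracting $p$-th roots and using that $r_j^{1/p^e}$ still has controlled degree; then a standard argument recovers that $f(x+y)-f(x)-f(y)$ vanishes identically.

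The main obstacle is the third step: the bounds on $N$ already follow from a relatively clean degree count once the Rédei set-up is in place, but proving \emph{linearity} requires combining the $p$-power structure of the $r_j$ with the Hasse–Weil estimate applied to the curve cut out by $r_1$ (or, in Ball's approach, a direct algebraic argument handling the small-characteristic cases separately). Pinning down exactly when one can extract $p^e$-th roots on the nose—rather than only up to a polynomial of higher degree—is the delicate point where the hypothesis $s>2$ is used to rule out the sporadic non-linear configurations that exist for $s=2$.
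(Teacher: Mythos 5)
This theorem is quoted from Ball's 2003 paper \cite{Ball2003}; the present paper offers no proof of it, so there is no internal argument to compare yours against. Judged on its own terms, what you have written is a plan rather than a proof, and it defers exactly the steps that make the theorem hard. The set-up is standard and correct: the R\'edei polynomial $R(X,Y)=\prod_x(X+f(x)-xY)$ with $\deg r_j\le j$, the fact that $R(X,m)=X^q-X$ for every non-determined affine direction $m$, the observation that $R(X,m)$ is an $s$-th power when $m$ is determined, and the counting argument for $N\le (q-1)/(s-1)$ (each determined direction contributes at least $s-1$ further graph points on the line through a fixed graph point, and these sets are disjoint). But the lower bounds $(q+3)/2$ and $q/s+1$, the claim that $\F_s$ is a subfield of $\F_q$, and the linearity for $s>2$ are asserted, not derived.

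Concretely: (a) the displayed ``factorisation'' of $r_j(Y)$ involving $g_j(Y)^{1/\text{something}}$ is not a meaningful statement, and the passage from ``$R(X,m)$ is an $s$-th power for each determined $m$'' to structural information about the bivariate coefficients $r_j(Y)$ is precisely the hard content of the Blokhuis--Ball--Brouwer--Storme--Sz\H{o}nyi/Ball argument; it requires the theory of lacunary polynomials (showing that suitable fully reducible polynomials of the form $X^{q/s}h(X)+\cdots$ have very restricted shape), none of which appears in your sketch. (b) The Hasse--Weil estimate you invoke for the linearity step is not the mechanism of Ball's proof, which is algebraic, extracting $p^e$-th power parts of the coefficient polynomials and inducting on $e$; moreover $r_1$ has degree at most $1$, so it is unclear what curve it would cut out. (c) The delicate small-$s$ cases you flag at the end are exactly the two cases Ball's paper was written to settle, so acknowledging them as ``the main obstacle'' leaves the theorem unproved. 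In short, the proposal identifies the right framework but does not constitute a proof.
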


Theorem~\ref{th:graph1} completed two unresolved cases from \cite[Theorem 1.1]{BBB}. 
Many generalizations of the questions studied in \cite{BBB} have been investigated, and it is impossible to summarize them 
in a concise way in this introduction. One notable generalization is found in \cite{peter}, where bounds on 
the number of directions determined by an affine set points of size smaller than $q$ are derived. This
paper turns out to be very useful to study the following question.

Let $n > 1$ and let $V \subset \F_{q^n}$ be a set of size $q^k$, $k \geq 1$, that is also a $k$-dimensional vector space
over $\F_q$. Let $f: V \rightarrow \F_{q^n}$ be a function that is $\F_q$ linear, i.e. $f(\lambda x + \mu y) = \lambda f(x) + \mu f(y)$
for all $x,y \in V$ and for all $\lambda,\mu \in \F_q$. Then what is the minimum number of directions determined by the {\em graph of
$f$}, i.e. the set $\{(x,f(x)) | x \in V\} \subset \AG(2,q^n)$? 

This question is motivated by the question to find a lower bound on the size of an $\F_q$-linear set of rank $k$ in $\PG(1,q^n)$. The main result
can be stated as follows.

\begin{theorem}\label{th:main_intro}
An $\F_q$-linear set of rank $k\leq n$ in $\PG(1,q^n)$ which contains at least one point of weight one, contains at least $q^{k-1}+1$ points.
\end{theorem}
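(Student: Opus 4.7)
I would begin by normalizing the configuration using $\PGL(2,q^n)$-equivalence so that the given weight-one point is $\langle(1,0)\rangle_{q^n}$. If $U$ is the $k$-dimensional $\F_q$-subspace of $\F_{q^n}^2$ defining $L_U$, the weight-one hypothesis becomes $U\cap(\F_{q^n}\times\{0\})=\F_q\cdot(1,0)$, so the projection onto the second coordinate has a one-dimensional kernel and a $(k-1)$-dimensional image $V=\pi_2(U)\subseteq\F_{q^n}$. After choosing a section of $\pi_2$, I would then write
\[
U \;=\; \F_q(1,0)\ \oplus\ \{(f(v),v) : v\in V\}
\]
for some $\F_q$-linear map $f\colon V\to\F_{q^n}$.

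The key observation is that, because $U$ is additively closed, the set of directions determined by $U$ viewed as a $q^k$-element point set in $\AG(2,q^n)$ is exactly $L_U$; thus $|L_U|$ equals the number of directions determined by $U$. One of these is the horizontal direction, corresponding to the chosen weight-one point, while the remaining $|L_U|-1$ directions are parametrized by the slope set
\[
S \;=\; \{(f(v)+\lambda)/v : v\in V\setminus\{0\},\ \lambda\in\F_q\}\subseteq\F_{q^n}.
\]
The theorem therefore reduces to proving $|S|\geq q^{k-1}$; equivalently, that the affine ``thickened graph'' $\{(v,f(v)+\lambda) : v\in V,\ \lambda\in\F_q\}$, of size $q^k$, determines at least $q^{k-1}+1$ directions in $\AG(2,q^n)$.

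To obtain this lower bound I would invoke the direction-counting machinery for linearized polynomials whose domain is restricted to a proper $\F_q$-subspace, as developed in \cite{peter}. This is the main obstacle: the naive weight identity $\sum_{P\in L_U}(q^{w_P}-1)=q^k-1$ alone is too weak, since configurations with a single point of very large weight satisfy it with very few total points (for instance, a single point of weight $k-1$ together with the chosen weight-one point already uses $q^{k-1}+q-2$ of the budget $q^k-1$, so the weight identity alone cannot rule out a linear set with only a handful of points of large weight). A genuinely structural argument is needed, and this is what the subspace-restricted direction theorem supplies. The weight-one hypothesis enters in an essential way by allowing us to place $U$ in the ``graph plus one row'' form above, which is precisely what makes the direction-theoretic input applicable.
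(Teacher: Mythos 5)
Your reduction of the theorem to a direction problem is sound and is essentially the paper's Lemma \ref{richtingen}: since $U$ is closed under subtraction, the directions determined by $U$ as a $q^k$-point subset of $\AG(2,q^n)$ are exactly the points of $L_U$. But the proposal stops exactly where the real work begins. ``Invoke the direction-counting machinery of \cite{peter}'' is not a step one can take off the shelf: what \cite{peter} (and the paper's Lemma \ref{graad}) provides is the bound $|\D_\A|\geq \deg_X H(X,Y)$, where $H$ is the remainder-type polynomial obtained from dividing $X^{q^n}-X$ by the R\'edei polynomial $R(X,Y)$ of the affine set. One must still prove $\deg_X H(X,Y)\geq q^{k-1}$, and that is the heart of the theorem. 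The paper does it by evaluating $R$ at the slope $y_0$ of the weight-one point: Lemma \ref{Rsplits} shows $R(X,y_0)=\prod_{i=1}^{q^{k-1}}(X-\alpha_i)^q$ with distinct $\alpha_i\in\F_{q^n}$; each factor $X-\alpha_i$ divides both $R(X,y_0)$ and $X^{q^n}-X$, hence the nonzero polynomial $H(X,y_0)+X$, forcing $\deg_X H\geq q^{k-1}$. The final $+1$ then comes from $|L_U|\equiv 1 \pmod q$ (Result \ref{olga}), not from the direction bound itself. None of this is in your proposal.

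Moreover, your diagnosis of where the weight-one hypothesis enters --- ``by allowing us to place $U$ in the graph plus one row form, which is precisely what makes the direction-theoretic input applicable'' --- is not correct, and this matters. Any $\F_q$-linear set of rank $k\leq n$ in $\PG(1,q^n)$ misses some point and can therefore be written as the graph $\{\la (x,f(x))\ra_{q^n}\mid x\in V^*\}$ of an $\F_q$-linear map on a $k$-dimensional subspace, with or without a point of weight one; yet an $\F_{q^2}$-linear set of rank $k/2$, viewed as an $\F_q$-linear set of rank $k$, has only about $q^{k-2}+q^{k-4}+\cdots+1$ points. So no direction theorem applying to all affine $\F_q$-linear sets of size $q^k$ can output $q^{k-1}+1$; the weight-one point must be used inside the R\'edei-polynomial argument itself, as above. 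A smaller issue: your normalization (weight-one point at $\la(1,0)\ra_{q^n}$) turns $U$ into a ``thickened graph'' over a $(k-1)$-dimensional subspace rather than a genuine graph, which is not the form in which the relevant lemmas of \cite{peter} or of this paper (Lemma \ref{vormR}) are stated; the paper instead sends a non-determined direction to $\la(0,1)\ra_{q^n}$ and keeps the weight-one point at a finite, determined slope, which is exactly where it is needed.
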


For linear sets of rank $n$ in $\PG(1,q^n)$, Theorem \ref{th:main_intro} was shown in \cite[Lemma 2.2]{olga2}.

In Section~\ref{sec:preliminaries}, the connection between linear sets in $\PG(1,q^n)$ and the direction problem is described. Section~\ref{sec:line}
is devoted to the proof of Theorem~\ref{th:main_intro}, and in Section~\ref{sec:appl} we will use Theorem~\ref{th:main_intro} to 
derive a lower bound on the size of linear sets in $\PG(2,q^n)$ under certain assumptions.

\section{Preliminaries}\label{sec:preliminaries}

For any additive group $V$ let $V^* := V \setminus \{0\}$.


\subsection{Linear sets}

Let $k\geq 1$ and $r\geq 2$. A point set in $\PG(r-1,q^n)$ is an $\F_q$-linear set of rank $k$ if it equals a set $L_U$ for some $\F_q$-vector subspace $U$ of $\F_q^{rn}$ of dimension $k$, where
$$L_U=\{ \la u\ra_{q^n}\mid u \in U^\ast\}.$$ In other words, $L_U$ consists of the projective points defined by the vectors of $U^\ast$. Let $P=\la v\ra_{q^n}$ be a point of $L_U$, then the {\em weight} of the point $P$ in $L_U$ is defined as $wt(P)=\dim_q(\la v\ra_{q^n}\cap U)$. Hence, whenever we talk about the weight of a point in a linear set, the underlying defining vector space $U$ should be specified.

An equivalent point of view on linear sets and their weights is obtained using {\em field reduction}.
The underlying vector space of the projective space $\PG(r-1,q^n)$ is $V(r,q^n)$; if we consider $V(r,q^n)$ as a vector space over $\F_q$, then it has dimension $rn$, so it defines a projective space $\PG(rt-1,q)$. In this way, every point $P$ of $\PG(r-1,q^n)$ corresponds to a subspace of $\PG(rn-1,q)$ of dimension $(n-1)$ and it is not hard to see that this set of $(n-1)$-spaces forms a spread of $\PG(rn-1,q)$, which is called a {\em Desarguesian spread}. If $U$ is a subset of $\PG(rt-1,q)$, and $\S$ a Desarguesian $(n-1)$-spread, then we define $\B(U):=\{R \in \S \mid U \cap R \neq \emptyset\}$. In this paper, we consider the Desarguesian spread $\S$ as fixed and we identify the elements of $\B(U)$ with their corresponding points of $\PG(r-1, q^n)$. An \emph{$\F_q$-linear set $T$ of rank $k$} in $\PG(r-1,q^{t})$ is then a set of points such that $T=\B(\mu)$, where $\mu$ is an $(k-1)$-dimensional subspace of $\PG(rt-1,q)$. If $\mu$ is the $(k-1)$-space defined by the $k$-dimensional vectorspace $U$ of $V(rn-1,q)$, then $\B(\mu)=L_U$. The {\em weight} of a point $P=\B(p)$ of the linear set $\B(\mu)$ can then equivalently be defined as $\dim(\mu\cap \B(p))+1$, i.e., one more than the projective  dimension of the intersection of the spread element corresponding to the point $P$ with the subspace $\mu$ defining the linear set $\B(\mu)$. We see that a point $Q$ belongs to the linear set $\B(\mu)$ if and only if the weight of $Q$ in $\B(\mu)$ is at least one. For more information about linear sets and field reduction, we refer to \cite{FQ11,olga}.

\begin{remark}
 Suppose that we have a linear set $\B(\mu)$ that has only points of weight at least $j$ for some $j>1$ and contains a point $Q=\B(q)$ of weight exactly $j$. We can pick a subspace $\nu$ of codimension $j-1$ in $\mu$ meeting $\B(q)\cap \mu$ in exactly a point. As all points have weight at least $j$ and $\nu$ has codimension $j-1$ in $\mu$, $\B(\mu)=\B(\nu)$ and $Q$ has weight $1$ in $\B(\nu)$.
 
 We see that every $\F_q$-linear set $L_U$ can be written as an $\F_q$-linear set $L_{U'}$ that contains at least one point of weight one. In Theorem \ref{hoofd}, we will restrict to linear sets having a point of weight one, which should by the previous argument not be seen as a heavy restriction. However, the study of linear sets $L_U$ where all points have weight $>1$ is of interest as well (see Remark \ref{allesmeer}).
\end{remark}

\begin{remark} The only $\F_q$-linear set of rank $k>n$ in $\PG(1,q^n)$ is the set of all points of $\PG(1,q^n)$. For this reason, we restrict ourselves to $\F_q$-linear sets of rank $k\leq n$.

\end{remark}
\begin{lemma} Let $L_U$ be an $\F_q$-linear set of rank $k$ in $\PG(1,q^n)$, $k\leq n$, not containing the point $\la(0,1)\ra_{q^n}$, then $L=\{\la (x,f(x))\ra_{q^n}|x\in V^\ast\}$ for some vector subspace $V$ of dimension $k$ and some $\F_q$-linear map $f: V \rightarrow \F_{q^n}$.
\end{lemma}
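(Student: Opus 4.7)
The plan is to use projection onto the first coordinate and exploit the hypothesis to turn that projection into a bijection from $U$ onto a $k$-dimensional $\F_q$-subspace $V$ of $\F_{q^n}$, then read off $f$ as the inverse followed by projection onto the second coordinate.

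Concretely, I would consider the $\F_q$-linear map $\pi:U\to\F_{q^n}:(x,y)\mapsto x$. The assumption that $\la(0,1)\ra_{q^n}\notin L_U$ says precisely that no nonzero vector of $U$ has first coordinate zero; equivalently, $\ker\pi=\{0\}$. Hence $\pi$ is injective, and $V:=\pi(U)$ is an $\F_q$-subspace of $\F_{q^n}$ of dimension exactly $k$. For every $x\in V$ there is therefore a unique $y\in\F_{q^n}$ with $(x,y)\in U$, and I define $f(x):=y$.

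The $\F_q$-linearity of $f$ is then immediate: for $x_1,x_2\in V$ and $\lambda,\mu\in\F_q$, both $(x_1,f(x_1))$ and $(x_2,f(x_2))$ lie in $U$, so the $\F_q$-linearity of $U$ gives
\[
\lambda(x_1,f(x_1))+\mu(x_2,f(x_2))=(\lambda x_1+\mu x_2,\lambda f(x_1)+\mu f(x_2))\in U,
\]
and by the uniqueness inherent in the definition of $f$ this forces $f(\lambda x_1+\mu x_2)=\lambda f(x_1)+\mu f(x_2)$.

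Finally, every element of $U^\ast$ has the form $(x,f(x))$ for a unique $x\in V^\ast$, so
\[
L_U=\{\la u\ra_{q^n}\mid u\in U^\ast\}=\{\la(x,f(x))\ra_{q^n}\mid x\in V^\ast\},
\]
as required. There is no real obstacle here; the only thing to be careful about is reading the hypothesis correctly as the statement that $\pi$ has trivial kernel, which is what lets $\pi$ be inverted to define $f$ as a genuine function rather than a multivalued relation.
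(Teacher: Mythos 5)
Your proof is correct and follows essentially the same route as the paper: the paper also observes that two vectors of $U$ with equal first coordinates would differ by a nonzero vector $(0,\ast)\in U$, contradicting $\la(0,1)\ra_{q^n}\notin L_U$, and then derives the linearity of $V$ and $f$ from the closure of $U$ under $\F_q$-linear combinations. Your phrasing via the kernel of the projection $\pi$ is just a cleaner packaging of the identical argument.
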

\begin{proof} We have that $L_U=\{ \la u\ra_{q^n}\mid u \in U^\ast\}$, where $U$ is a subspace of dimension $k$ of $\F_q^{2n}$. We consider $\F_q^{2n}$ as $\F_{q^n}^2$ and see that every element of $U$ can be written as $(\alpha_i,\beta_i)$ for some $\alpha_i,\beta_i$ in $\F_{q^n}$, $i=1,\ldots,q^{k}$. Put $\beta_i=f(\alpha_i)$.
Suppose to the contrary that $\alpha_{i_0}=\alpha_{j_0}$ for some $i_0\neq j_0$. The elements $(\alpha_{i_0},f(\alpha_{i_0}))$ and $(\alpha_{j_0},f(\alpha_{j_0})$ are distinct elements of $U$, so if $\alpha_{i_0}=\alpha_{j_0}$, then $f(\alpha_{i_0})\neq f(\alpha_{j_0})$. As $U$ is a vector subspace, it follows that $(\alpha_{i_0},f(\alpha_{i_0}))-(\alpha_{j_0},f(\alpha_{j_0}))=(0,f(\alpha_{i_0})-f(\alpha_{j_0})$ is an element of $U$. But $L_U$ is skew from the point $\la(0,1)\ra_{q^n}$, a contradiction. We conclude that $V=\{\alpha_i \mid 1\leq i\leq q^k\}$ has size $q^k$.

Since $U$ is an $\F_q$ subspace, we have that for all $1\leq i\leq q^k$, and $\lambda,\mu\in \F_q$ that $\lambda (\alpha_i,f(\alpha_i))+\mu(\alpha_j,f(\alpha_j))=(\lambda \alpha_i+\mu\alpha_j,\lambda f(\alpha_i)+\mu f(\alpha_j))$ has to be a vector of $U$. Hence, both the set $V=\{\alpha_i \mid 1\leq i\leq q^k\}$ as the map $f$ are closed under $\F_q$-linear combinations. It follows that $V=\{\alpha_i \mid 1\leq i\leq q^k\}$ is an $\F_q$-subspace of dimension $k$ and that $f$ is an $\F_q$-linear map.

\end{proof}

From now on, whenever we write $L_U=\{\la (x,f(x))\ra_{q^n}|x\in V^\ast\}$, we assume that $U$ is the subspace $\{(x,f(x)) | x \in V\}$. In this way, the weight of a point in $L_U$ is unambiguously defined.



\subsection{Directions determined by a point set}

The set of directions determined by an affine point set $\A=\{\la(1,x_i,y_i)\ra_{q^n} \mid 1\leq i\leq n\}$ in $\PG(2,q^n)$ is the set $\{\la(0,x_i-x_j,y_i-y_j)\ra_{q^n}\mid 1\leq i\neq j\leq n\}$. The {\em slope} of a direction $\la (0,1,y)\ran$ is $y$, while the slope of $\la (0,0,1)\ran$ is $\infty$. If $\A$ is an affine pointset, we define $\D_\A$ to be the set of slopes of the directions determined by $\A$.

\begin{lemma}\label{richtingen} The number of points of $L=\{\la (x,f(x))\ra_{q^n}|x\in V^\ast\}$,  where $V$ is a vector subspace of $\F_{q^n}$ and $f:V\rightarrow \F_{q^n}$ is an $\F_q$-linear map, is equal to the number of directions determined by the affine pointset $\A=\{\la (1,x,f(x))\ran \mid x\in  V\}$.
\end{lemma}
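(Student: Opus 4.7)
The plan is to set up an explicit bijection between the points of $L$ in $\PG(1,q^n)$ and the directions determined by $\A$ in $\PG(2,q^n)$. First I would unpack the definition: the set of directions of $\A$ is
\[
\D_\A = \{ \la (0, x - x', f(x) - f(x')) \ran \mid x, x' \in V,\ x \neq x' \}.
\]
Since $f$ is $\F_q$-linear and $V$ is an $\F_q$-subspace of $\F_{q^n}$, we have $f(x) - f(x') = f(x-x')$, and as $(x,x')$ ranges over pairs with $x \neq x'$, the difference $y := x - x'$ ranges over $V^\ast$. Hence
\[
\D_\A = \{ \la (0, y, f(y)) \ran \mid y \in V^\ast \}.
\]

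Next I would compare this with $L = \{ \la (x, f(x)) \ran_{q^n} \mid x \in V^\ast \}$ via the map
\[
\varphi : L \to \D_\A, \qquad \la (x, f(x)) \ran_{q^n} \mapsto \la (0, x, f(x)) \ran_{q^n}.
\]
To see that $\varphi$ is well-defined and injective, I would observe that for $y, y' \in V^\ast$, the directions $\la (0, y, f(y)) \ran$ and $\la (0, y', f(y')) \ran$ coincide in $\PG(2,q^n)$ if and only if there exists $\lambda \in \F_{q^n}^\ast$ with $(y', f(y')) = \lambda (y, f(y))$, which is precisely the condition that $\la (y, f(y)) \ran_{q^n} = \la (y', f(y')) \ran_{q^n}$ in $\PG(1,q^n)$. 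Surjectivity is immediate from the description of $\D_\A$ above.

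There is no genuine obstacle here; the only thing worth being careful about is the symmetric appearance of $x$ and $x'$ in the direction set, which is resolved exactly by the linearity of $f$. Combining the two descriptions yields $|\D_\A| = |L|$, which is the claim.
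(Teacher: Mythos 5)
Your proof is correct and takes essentially the same approach as the paper: both arguments hinge on the observation that $f(x)-f(x')=f(x-x')$ with $x-x'\in V^\ast$, so the determined directions are exactly $\{\la (0,y,f(y))\ran \mid y\in V^\ast\}$, which is in natural bijection with $L$. The paper merely phrases this via the set of slopes $W=\{f(x)/x\mid x\in V^\ast\}$ instead of your explicit projective map (and reserves the notation $\D_\A$ for that slope set), a purely cosmetic difference.
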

\begin{proof} The number of points of $\{\la (x,f(x))\ra_{q^n}|x\in V^\ast\}=\{\la (1,f(x)/x)\ra_{q^n}|x\in V^\ast\}$ is clearly equal to the size of the set $W=\{f(x)/x|x\in V^*\}$. The points $\la(1,x_1,f(x_1))\ran$ and $\la(1,x_2,f(x_2))\ran$ determine the direction $\la(0,x_1-x_2,f(x_1)-f(x_2))\ran$. Since $f$ is $\F_q$-linear and $V$ is a subspace, $\la(0,x_1-x_2,f(x_1)-f(x_2))\ran$ is the direction $\la (0,1,f(x_3)/x_3)\ran$, with $x_3=x_1-x_2$. This implies that every direction determined by $\A$ is an element of the set $\{\la (0,1,w)\ran \mid w \in W\}$. Vice versa, take a point $\la (0,1,w_0)\ran$, with $w_0\in W$, then $w_0=f(x_0)/x_0$ for some $x_0\in V^\ast$. Then $\la (1,0,0)\ran$ and $\la (1,x_0,f(x_0))\ran$ are points of $\A$ that determine the direction $\la (0,1,w_0)\ran$. This proves that the number of directions determined by $\A$ is equal to the size of $W$.
\end{proof}
\begin{remark}
Note that the direction $\la(0,0,1)\ran$ with slope $\infty$ is not determined by $\A=\{\la (1,x,f(x))\ra_{q^n}|x\in V^\ast\}$.
\end{remark}

\subsection{The R\'edei polynomial}

Let $S=\{\la (1,x_i,y_i)\ra_{q^n}\mid1\leq i\leq | S |\}$ be a set of affine points in $\PG(2,q^n)$.
Define the R\'edei polynomial of $S$ as follows: 
$$R(X,Y)=\prod_{i=1}^{|S|} (X-x_iY+y_i).$$
As usual (see e.g. \cite{BBB,peter}), we will consider the expansion of $R(X,Y)$ using elementary symmetric polynomials. 
Let $\sigma_i(Y)$ be the $i$-th elementary symmetric polynomial of the set $\{-x_iY+y_i | 1 \leq i \leq |S|\}$,
then
$$R(X,Y)=X^{|S|} + \sum_{i=1}^{|S|}\sigma_{i}(Y)X^{|S|-i}.$$
Note that $\deg \sigma_i(Y) \leq i$. 

Let $y$ be a slope. Then $x$ is a root of $R(X,y) = 0$ with multiplicity $m$ if and only if the line with equation $xX_0 - yX_1 + X_2 = 0$
contains exactly $m$ points of $S$.

\section{Linear sets of $\PG(1,q^n)$}\label{sec:line}

Substituting the variable $Y$ in $R(X,Y)$ by slopes will provide particular information on the shape of the R\'edei polynomial. In the language of
direction problems, the next Lemma deals with substitution of a determined slope.

\begin{lemma}\label{Rsplits}Let $P=\la (x_0,f(x_0))\ran$ be a point of weight $j$ in $L_U=\{\la (x,f(x)\ran \mid x\in V^\ast\}$, then $R(X,y_0)$ with $y_0=f(x_0)/x_0$  is of the form
$$R(X,y_0)=\prod_{i=1}^{q^{k-j}}(X-\alpha_i)^{q^j},$$ for distinct $\alpha_i\in \F_{q^n}$.
\end{lemma}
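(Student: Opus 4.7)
The plan is to rewrite $R(X,y_0)$ as a product controlled by a single $\F_q$-linear map and then identify the kernel of that map with the weight of $P$.

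First I would note that the affine point set corresponding to $L_U$ is $\A=\{\la(1,x,f(x))\ran\mid x\in V\}$, so the R\'edei polynomial unfolds as
\[
R(X,y_0)=\prod_{x\in V}\bigl(X-xy_0+f(x)\bigr)=\prod_{x\in V}\bigl(X+g(x)\bigr),
\]
where $g\colon V\to\F_{q^n}$ is defined by $g(x)=f(x)-y_0 x$. Because $f$ is $\F_q$-linear on $V$ and $y_0\in\F_{q^n}$, the map $g$ is $\F_q$-linear as well.

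Next I would exploit $\F_q$-linearity of $g$: every nonempty fibre of $g$ is a coset of $\ker g$, so has cardinality $|\ker g|$. Therefore the distinct roots of $R(X,y_0)$ are exactly the elements $-\alpha$ with $\alpha\in\mathrm{Im}(g)$, and each occurs with multiplicity $|\ker g|$. Writing these distinct roots as $\alpha_1,\ldots,\alpha_N$, this gives
\[
R(X,y_0)=\prod_{i=1}^{N}(X-\alpha_i)^{|\ker g|},\qquad N=|\mathrm{Im}(g)|,
\]
which already has the desired shape; it remains to identify the exponents and the number of factors.

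The main step is then to show that $\dim_q\ker g=j$. By definition $wt(P)=\dim_q\bigl(\la(x_0,f(x_0))\ran_{q^n}\cap U\bigr)$. Any vector on the line $\la(x_0,f(x_0))\ran_{q^n}$ can be written as $(\mu,\mu y_0)$ with $\mu\in\F_{q^n}$, and such a vector lies in $U$ precisely when $\mu\in V$ and $f(\mu)=\mu y_0$, i.e.\ when $g(\mu)=0$. Hence the intersection equals $\{(x,f(x))\mid x\in\ker g\}$, whose $\F_q$-dimension is exactly $\dim_q\ker g$. Thus $|\ker g|=q^j$, and by rank–nullity $|\mathrm{Im}(g)|=q^{k-j}$, yielding the stated factorisation with $q^{k-j}$ distinct roots each of multiplicity $q^j$.

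The only delicate point is the weight computation in the last paragraph; everything else is bookkeeping about $\F_q$-linear maps. Once the intersection of the spread element through $P$ with $U$ is correctly written as the graph of $f$ restricted to $\ker g$, the identification $|\ker g|=q^j$ is immediate and the rest of the proof assembles automatically.
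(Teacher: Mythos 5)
Your proof is correct and follows essentially the same route as the paper's: both reduce the multiplicity of each factor of $R(X,y_0)$ to the number of solutions of $f(\Lambda x_0)=\Lambda f(x_0)$ (your $\ker g$ for $g(x)=f(x)-y_0x$) and identify this count with $q^j$ via the definition of weight. Your packaging in terms of the kernel and image of the $\F_q$-linear map $g$ is just a streamlined reformulation of the paper's argument, which counts points of $\A$ on the lines through the direction $\la(0,x_0,f(x_0))\ra_{q^n}$.
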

\begin{proof} Let $P=\la (x_0,f(x_0))\ran$ be a point of weight $j$ in $L_U=\{\la (x,f(x)\ran | x\in V^\ast\}$. By definition, $P$ has weight $j$ in $L_U$ if there are $q^j$ elements $\Lambda\in \F_{q^n}$ such that $(\Lambda x, \Lambda f(x))$ is contained in $U=\{(x,f(x))|x \in V\}$. This implies that

\begin{equation}\label{eq:sol_lambda}
f(\Lambda x_0)=\Lambda f(x_0)
\end{equation}

has $q^j$ solutions for $\Lambda$. 

Let $x_1 \in V$. For any $\Lambda \in \F_{q^n}$, the point 
$\la (1,x_1 + \Lambda x_0,f(x_1) + \Lambda f(x_0))\ran \in \A \iff f(x_1 + \Lambda x_0) = f(x_1) + \Lambda f(x_0)$ 
and $x_1 + \Lambda x_0 \in V$. The condition $x_1 + \Lambda x_0 \in V$ is equivalent with $\Lambda x_0 \in V$, and so the
condition $f(x_1 + \Lambda x_0) = f(x_1) + \Lambda f(x_0)$ is equivalent with $f(\Lambda x_0) = \Lambda f(x_0)$.

Hence, the number of points of $\A$ on the line through $\la (1,x_1,f(x_1)\ran$ and $\la (0,x_0,f(x_0))\ran$ equals 
precisely the number of solutions of Equation~\ref{eq:sol_lambda} (and $\Lambda = 0$ corresponds with 
the point $\la (1,x_1,f(x_1))\ran$).

By definition, $R(X,y_0)=\prod_{x\in V}(X-xy_0+f(x))$. Now $X-xy_0+f(x)=X-x_1y_0+f(x_1)$ 
if and only if the points $\la (1,x,f(x))\ran$, $\la (1,x_1,f(x_1))\ran$, and $\la (0,1,y_0)\ran$ are collinear.
Hence, the factor $(X-x_1y_0+f(x_1))$ appears exactly $q^j$ times in $R(X,y_0)$.
\end{proof}

\begin{remark} We can also deduce Lemma \ref{Rsplits} from a more geometrical point of view. Let $L_U=\B(\pi)$, where $\pi$ is a $(k-1)$-space in $\PG(2n-1,q)$, embed $\PG(2n-1,q)$ as the subspace consisting of all points of the form $\la (0,y,z)\ra_q$ in $\PG(3n-1,q)$ and consider $L_U$ as a subset of $\PG(2,q^n)$, contained in the line $X_0=0$ (at infinity). Let $\mu$ be the subspace spanned by the point $\la (1,0,0)\ra_q$ of $\PG(3n-1,q)$ and $\pi$. Then $\B(\mu)\setminus \B(\pi)$ consists of the $q^k$ points of $\{\la (1,x,f(x))\mid x \in V\ran\}$). If $P=\la (0,x_0,f(x_0))\ran$, $x_0\in V^\ast$ is a point of weight $j$ in $L_U=\B(\pi)$, this means the spread element $S$ (of the Desarguesian $(n-1)$-spead $\S$) corresponding to $P$ meets $\pi$, and hence also $\mu$, in a $(j-1)$-dimensional space. Every line through $P$ in $\PG(2,q^n)$ containing a point $\la (1,x_0,f(x_0))\ran$ of $\la (1,x,f(x))\mid x \in V\ran$ corresponds to a $(2n-1)$-dimensional subspace of $\PG(3n-1,q)$, spanned by spread elements of $\S$, meeting $\mu$ in a subspace $\nu$ of dimension $j$. As $\pi$ is a hyperplane of $\mu$, and $P=\B(\pi\cap \nu)$ this means that the line $\B(\nu)$ contains exactly $q^j$ points of $\{\la (1,x,f(x))\mid x \in V\ran\}$. Hence every line on a point of weight $j$ of $L_U$  that contains a point of $\A$, contains exactly $q^j$ points of $\A$. From the definition of the R\'edei polynomial $R(X,Y)$, this is saying exactly that every root of $R(X,y_0)$ has multiplicity exactly $q^j$, if $y_0$ is a slope corresponding with a point of weight $j$ of $L_U$, in other words, every factor of $R(X,y_0)$ has multiplicity $q^j$.
\end{remark}

We are now ready to deduce the shape of the R\'edei polynomial of the set $\A=\{\la (1,x,f(x))\ra_{q^n} \mid x \in U\}$.

\begin{lemma}\label{vormR} If $\A=\{\la (1,x,f(x))\ra_{q^n} \mid x \in V\}$, where $V$ is an $\F_q$-vector subspace of $\F_{q^n}$ of dimension $k$
and $f: V \rightarrow \F_{q^n}$ is an $\F_q$-linear map, 
then the R\'edei polynomial of $\A$ is of the following shape:
\begin{equation}\label{eq:redei_shape}
R(X,Y)=X^{q^k}+\sigma_{q^k-q^{k-1}}(Y) X^{q^{k-1}}+\sigma_{q^k-q^{k-2}}(Y) X^{q^{k-2}}+\ldots+\sigma_{q^k-1}(Y)X\,.
\end{equation}
\end{lemma}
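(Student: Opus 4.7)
The plan is to view $R(X,Y)$ as a polynomial in $X$ with coefficients in $\Fn[Y]$, recognise its root set as an $\F_q$-subspace of dimension $k$, and then invoke the characterisation of linearized polynomials. Comparing the result with the elementary symmetric expansion will yield the shape in Equation~\eqref{eq:redei_shape}.

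First I would rewrite
\[
R(X,Y)=\prod_{x\in V}\bigl(X-(xY-f(x))\bigr)
\]
and consider the map $\phi:V\to \Fn[Y]:x\mapsto xY-f(x)$. Since $V$ is an $\F_q$-subspace of $\Fn$ and $f$ is $\F_q$-linear, $\phi$ is $\F_q$-linear. It is also injective: if $xY-f(x)=0$ in $\Fn[Y]$, then the coefficient of $Y$ forces $x=0$. Hence the image $W:=\phi(V)$ is an $\F_q$-subspace of the function field $\Fn(Y)$ of dimension exactly $k$, and its $q^k$ distinct elements are precisely the roots (each of multiplicity one) of $R(X,Y)$ regarded as a polynomial in $X$ over $\Fn(Y)$.

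Next I would invoke the standard fact that a monic polynomial over a field $K\supseteq \F_q$ whose set of distinct roots is a $k$-dimensional $\F_q$-subspace of $K$ is a linearized polynomial of $q$-degree $k$. Applying this with $K=\Fn(Y)$ yields
\[
R(X,Y)=X^{q^k}+c_{k-1}(Y)X^{q^{k-1}}+\cdots+c_1(Y)X^{q}+c_0(Y)X,
\]
with $c_i(Y)\in \Fn(Y)$; since $R(X,Y)\in \Fn[X,Y]$, in fact each $c_i(Y)$ already lies in $\Fn[Y]$.

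Finally I would compare with the expansion
\[
R(X,Y)=X^{q^k}+\sum_{i=1}^{q^k}\sigma_i(Y)X^{q^k-i}.
\]
Uniqueness of coefficients forces $\sigma_i(Y)\equiv 0$ whenever $q^k-i\notin\{1,q,q^2,\dots,q^{k-1}\}$; moreover the constant term vanishes because $0\in V$ contributes the factor $X-0=X$ to $R(X,Y)$. The surviving nonzero terms are exactly $\sigma_{q^k-q^j}(Y)\,X^{q^j}$ for $j=0,1,\dots,k-1$, which is precisely Equation~\eqref{eq:redei_shape}. I expect the only non-routine step to be the one-line observation that the roots must be compared in $\Fn(Y)$ rather than in $\Fn$: only over the function field are the quantities $xY-f(x)$ guaranteed to be $\F_q$-linearly independent as $x$ ranges over $V$, and this is exactly what produces the full $k$-dimensional $\F_q$-subspace required to trigger the linearized-polynomial characterisation.
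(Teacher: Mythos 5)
Your proof is correct, and it takes a genuinely different route from the paper's. The paper specialises $Y$ to each $y\in\F_{q^n}$, distinguishes whether $y$ is a determined direction or not (using Lemma \ref{Rsplits} to control multiplicities in the determined case, where the root set $V_y$ drops dimension), concludes that $\sigma_i(y)=0$ for every $y$ whenever $i\notin\{q^k-q^j\mid j=0,\dots,k-1\}$, and then kills each such $\sigma_i$ by comparing its degree ($\le i<q^n$) with its number of roots ($q^n$). You instead work generically over the function field $K=\F_{q^n}(Y)$: since $x\mapsto xY-f(x)$ is an injective $\F_q$-linear map, the root set of $R(X,Y)$ in $K$ is a full $k$-dimensional $\F_q$-subspace with all roots simple, and one application of the subspace-polynomial theorem finishes the argument. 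This buys you a cleaner, case-free proof that needs neither Lemma \ref{Rsplits} nor any root-counting, at the cost of invoking the subspace-polynomial theorem over $K=\F_{q^n}(Y)$ rather than over a finite field as in \cite[Theorem 3.52]{lidl}. That extension is genuinely needed and should be stated rather than called standard: the usual induction on a basis of $W$, using the identity $\prod_{c\in\F_q}(T-c\beta)=T^q-\beta^{q-1}T$ valid over any field containing $\F_q$, goes through verbatim, so the gap is only expository. With that one sentence added, the argument is complete and correct.
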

\begin{proof}
First consider an element $y_0 \notin \D_\A$. Then the set $V_{y_0} = \{-xy_0+f(x) | x \in V\}$ is an $\F_q$-vector subspace of $\F_{q^n}$ of dimension $k$.
Hence, by \cite[Theorem 3.52]{lidl},
\[
R(X,y_0) = \prod_{\beta \in V_{y_0}}(X-\beta)=X^{q^k}+\alpha_{1} X^{q^{k-1}}+\alpha_2 X^{q^{k-2}}+\ldots+\alpha_kX\,,
\]
with $\alpha_i \in \F_{q^n}$. 
Then consider an element $y_1\in \D_\A$. By Lemma \ref{Rsplits}, we know that if $\la (1,y_1)\ran$ is a point of weight $j_1$, then $R(X,y_1)$ contains $q^{k-j_1}$ distinct factors, each of degree $q^{j_1}$. As before, the set $V_{y_1} = \{-xy_1+f(x) | x \in V\}$ is an $\F_q$-vector subspace of $\F_{q^n}$, but the number of elements in $V_{y_1}$ is $q^{k-{j_1}}$, and hence, the dimension of $V_{y_1}$ is $q^{k-{j_1}}$. We now obtain that 

\[
R(X,y_1) = \prod_{\beta' \in V_{y_1}}(X-\beta')^{q^j}=(X^{q^{k-j_1}}+\alpha'_{1} X^{q^{k-{j_1}-1}}+\alpha'_2 X^{q^{k-2}}+\ldots+\alpha'_{k-j_1-1}X)^{q^{j_1}},
\]

We conclude that for all $y \in \F_{q^n}$, $\sigma_i(y) = 0$ if $i \not \in \{q^k-q^{j}| j = 0 \ldots k-1 \}$. Since $\deg \sigma_i(Y) \leq i$, 
each of the polynomials $\sigma_i(Y), i \not \in \{q^k-q^{j}| j = 0 \ldots k-1 \}$ has more roots than its degree, and so is identically zero.
Also note that since $\la(1,0,0)\ran \in \A$, $0 \in \{-x_iY+y_i | 1 \leq i \leq |\A|\}$, hence $\sigma_{q^k}(Y)$ is identically zero. So
$R(X,Y)$ has the shape of \eqref{eq:redei_shape}.
\end{proof}

\begin{remark}

A set of the form $\A=\{\la (1,x,f(x))\ra_{q^n} \mid x \in V\}$, where $f$ is an $\F_q$-linear map and $V$ is an $\F_q$-vector subspace of $\F_{q^n}$, is called an {\em affine $\F_q$-linear set} in \cite{peter}.
\end{remark}

We see that if $R(X,Y)$ is the R\'edei polynomial associated with $\{\la (1,x,f(x))\ra_{q^n} \mid x \in V\}$  
then $R(X,Y)$ is an $\F_q$-linear map in the variable $X$, and for every $y \in \F_{q^n}$, the 
map $R(X,y)$ is a linearised polynomial.

The following arguments are based on \cite{peter}. We consider the polynomial $R(X,Y)$ as a 
univariate polynomial in $X$ over the ring $\F_{q^n}[Y]$. Since 
$R(X,Y)$ is monic, division with remainder of $X^{q^n}-X$ by $R(X,Y)$ can be executed using
the ordinary Euclidean division algorithm for polynomials over a field. Hence there exists
polynomials $Q(X,Y),r(X,Y) \in \F_{q^n}[Y][X]$ such that
\begin{equation}\label{eq:euclides}
X^{q^n} - X = R(X,Y)Q(X,Y) + r(X,Y)\,,
\end{equation}
with $\deg_X r(X,Y) < \deg_X R(X,Y)$. Since $R(X,Y)$ is monic of degree $q^k$, we can write
\begin{equation}\label{eq:euclides2}
Q(X,Y) = X^{q^n-q^k} + \sum_{i=1}^{q^n-q^k}\sigma_{i}^*(Y)X^{q^n-q^k-i}\,.
\end{equation}
For convenience, we define $\sigma_0^*(Y) = 0$.
\begin{lemma}\label{le:bounded_degree}
Consider the polynomials $Q(X,Y)$ and $r(X,Y)$ from Equation~\ref{eq:euclides}. Then $\deg Q(X,Y) \leq q^n$ 
and $\deg r(X,Y) \leq q^n$ (where $\deg Q(X,Y)$ means the total degree). Furthermore, in Equation~\ref{eq:euclides2}, $\deg \sigma_i^*(Y) \leq i$.
\end{lemma}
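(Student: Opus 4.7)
The plan is to first establish the coefficient bound $\deg \sigma_i^*(Y) \leq i$, and then derive the total degree bounds on $Q$ and $r$ as immediate consequences. Since $R(X,Y)$ is monic in $X$ of degree $q^k$ and $\deg_X r(X,Y) < q^k$, the coefficient of $X^{q^n - j}$ on both sides of Equation~\ref{eq:euclides} involves only the product $R(X,Y)Q(X,Y)$ whenever $0 \leq j \leq q^n - q^k$. In this range, the $-X$ summand on the left hand side contributes nothing (it corresponds to $j = q^n-1$, which exceeds $q^n - q^k$ whenever $k \geq 1$), so matching coefficients and isolating the $a = 0$ contribution $\sigma_0(Y)\sigma_j^*(Y) = \sigma_j^*(Y)$ yields the recursion
\[
\sigma_j^*(Y) \;=\; -\sum_{a=1}^{\min(j,\,q^k)} \sigma_a(Y)\, \sigma_{j-a}^*(Y), \qquad 1 \leq j \leq q^n - q^k,
\]
with the initial condition $\sigma_0^*(Y) = 1$.

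I would then prove $\deg \sigma_j^*(Y) \leq j$ by induction on $j$. The base case $j = 0$ is immediate. For the inductive step, I would use that $\deg \sigma_a(Y) \leq a$, which holds by definition of the R\'edei polynomial (each $\sigma_a$ is the $a$-th elementary symmetric polynomial in the linear forms $-x_iY + y_i$). Each summand on the right hand side of the recursion then has degree at most $a + (j-a) = j$ by the inductive hypothesis applied to $\sigma_{j-a}^*$, which bounds the full sum.

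Given the bound $\deg \sigma_i^*(Y) \leq i$, each monomial $\sigma_i^*(Y) X^{q^n - q^k - i}$ in Equation~\ref{eq:euclides2} has total degree at most $(q^n - q^k - i) + i = q^n - q^k$, so $\deg Q(X,Y) \leq q^n - q^k \leq q^n$. For the remainder, rearranging Equation~\ref{eq:euclides} gives $r(X,Y) = X^{q^n} - X - R(X,Y)Q(X,Y)$, and the product $RQ$ has total degree at most $\deg R + \deg Q \leq q^k + (q^n - q^k) = q^n$, whence $\deg r(X,Y) \leq q^n$ as well. The argument is mechanical once the recursion is set up; the only subtle point is verifying that the $-X$ term on the left of the Euclidean identity falls outside the range of indices used for the high-degree coefficients of $Q$, so that it does not pollute the recursion.
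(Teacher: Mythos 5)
Your proof is correct, but it takes a genuinely different route from the paper's. The paper runs the Euclidean division algorithm explicitly and argues by induction on its steps, maintaining the invariants $\deg r(X,Y)\le q^n$, $\deg_X r(X,Y)\le q^n-j$ and $\deg\sigma_i^*(Y)\le i$: at step $j$ the coefficient $\sigma_j^*(Y)$ is read off as the leading $X$-coefficient of the current remainder (or $0$), and the update $r\mapsto r-\sigma_j^*(Y)X^{q^n-q^k-j}R(X,Y)$ is checked not to increase the total degree. You instead work with the finished identity $X^{q^n}-X=RQ+r$ and extract the convolution recursion $\sigma_j^*=-\sum_{a=1}^{\min(j,q^k)}\sigma_a\sigma_{j-a}^*$ by comparing coefficients of $X^{q^n-j}$ for $1\le j\le q^n-q^k$, correctly noting that in this range neither $r$ (since $\deg_X r<q^k$) nor the $-X$ term (since $q^n-1>q^n-q^k$ when $k\ge 1$) contributes; the bound $\deg\sigma_j^*(Y)\le j$ then follows by induction on $j$ from $\deg\sigma_a(Y)\le a$. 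The two arguments encode the same computation --- the division algorithm produces exactly your recursion --- but yours avoids reasoning about intermediate states of the algorithm and delivers the total-degree bounds on $Q$ and $r$ as clean corollaries of the coefficient bound. One cosmetic mismatch: the paper sets $\sigma_0^*(Y)=0$ by convention while your recursion uses $\sigma_0^*=1$ for the monic leading coefficient of $Q$; as long as you state that convention explicitly, nothing breaks.
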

\begin{proof} 
Before starting the Euclidean division with remainder algorithm, $Q(X,Y)$ is initialized as $0$ and $r(X,Y)$ is initialized as $X^{q^n}-X$.
So let
\begin{equation}\label{eq:rhos}
r(X,Y) = \sum_{i=0}^{q^n}\rho_{i}(Y)X^{q^n-i}\,.
\end{equation}
Then initially, $\rho_0(Y) = 1$, $\rho_{q^ n-1}(Y)=-1$, and $\rho_i(Y)=0$ for $i \not \in \{0,q^n-1\}$, so initially
$\deg \rho_{i}(Y) \leq i$ and $\deg r(X,Y)=\deg_X r(X,Y) = q^n$. As induction hypothesis, we assume that after execution of
step $j-1\geq 0$ in the Euclidean algorithm, $\deg r(X,Y) \leq q^n$, $\deg_X r(X,Y) \leq q^n - j$, and $\deg \sigma_{i}^*(Y) \leq i$ for all $i \leq j-1$.

During step $j$ of the algorithm, (1) $\sigma_j^*(Y)$ is computed and (2) $r(X,Y)$ is changed.

(1) The polynomial $\sigma_{j}^*(Y)$ becomes the leading coefficient of $r(X,Y)$ if $\deg_X r(X,Y) = q^n-j$ (because $R(X,Y)$ is monic),
and $0$ otherwise. From the induction hypothesis, $\deg r(X,Y) \leq q^n$ and $\deg_X r(X,Y) \leq q^n-j$, so in both cases $\deg \sigma_j^*(Y) \leq j$.


(2) The remainder $r(X,Y)$ becomes $r(X,Y) - \sigma_{j}^*(Y)X^{q^n-q^k-j}R(X,Y)$. 
Since $\deg R(X,Y) = q^k$ and by the induction hypothesis, $\deg \sigma_j^*(Y)\leq j$ and $\deg r(X,Y)\leq q^n$,
the total degree of $r(X,Y)$ remains bounded by $q^n$. Clearly, after executing step $j$, 
$\deg_X r(X,Y) \leq q^n-(j+1)$, so $\deg \rho_i(Y) \leq i$ for all admissible $i$.

By induction we can now conclude that after execution of the algorithm, $\deg r(X,Y) \leq q^n$, $\deg Q(X,Y) \leq q^n$,
$\deg \sigma_i^*(Y) \leq i$, and $\deg \rho_i(Y) \leq i$, for all $i$.
\end{proof}

As in \cite{peter}, define $H(X,Y) = -r(X,Y)-X$, then 
\begin{equation}\label{pol:h}
X^{q^n} - X = R(X,Y)Q(X,Y) -H(X,Y) - X\,.
\end{equation}

\begin{corollary}\label{graadhs}
Consider the polynomial $H(X,Y)$ from Equation~\ref{pol:h}. Then $\deg_X H(X,Y) \leq q^k-1$. Let 
\[
H(X,Y) = \sum_{i=0}^{q^n}h_{i}(Y)X^{q^n-i}\,,
\]
then $\deg h_i(Y) \leq i$.
\end{corollary}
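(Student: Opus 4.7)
The plan is to read this as essentially an unpacking of Lemma~\ref{le:bounded_degree} together with the defining relation $H(X,Y) = -r(X,Y) - X$ from Equation~\ref{pol:h}. The two claims are close to immediate corollaries once the coefficients of $r(X,Y)$ are compared with those of $H(X,Y)$, so no new ideas are needed beyond a careful term-by-term comparison.

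First I would handle the $X$-degree bound. Equation~\ref{eq:euclides} defines $r(X,Y)$ as the Euclidean remainder of $X^{q^n}-X$ after division by $R(X,Y)$, so $\deg_X r(X,Y) < \deg_X R(X,Y) = q^k$, i.e. $\deg_X r(X,Y) \le q^k-1$. Since $k\ge 1$, the monomial $X$ has $X$-degree $1 \le q^k-1$, so subtracting $X$ cannot raise the $X$-degree. Hence $\deg_X H(X,Y) = \deg_X(-r(X,Y)-X) \le q^k - 1$.

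Next I would prove the coefficient degree bound. Write $r(X,Y)=\sum_{i=0}^{q^n}\rho_i(Y)X^{q^n-i}$ as in Equation~\ref{eq:rhos}. By Lemma~\ref{le:bounded_degree}, $\deg \rho_i(Y)\le i$ for every $i$. Since $-X$ affects only the coefficient of $X^{q^n-(q^n-1)}=X^{1}$, comparison of coefficients in $H(X,Y)=-r(X,Y)-X$ yields $h_i(Y) = -\rho_i(Y)$ for all $i \neq q^n-1$, and $h_{q^n-1}(Y) = -\rho_{q^n-1}(Y) - 1$. In either case $\deg h_i(Y)\le \max(\deg \rho_i(Y), 0)\le i$ (using $q^n-1\ge 1$ for the one affected index).

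There is essentially no obstacle here; the only point that needs a small sanity check is that $q^k-1\ge 1$ and $q^n-1\ge 1$, both of which hold under our standing assumption $k\ge 1$ and $n>1$, so subtracting the low-degree term $X$ is harmless for both bounds.
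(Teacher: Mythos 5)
Your proof is correct and follows the same route as the paper: the paper's own proof is a one-liner citing $H(X,Y)=-r(X,Y)-X$ together with Lemma~\ref{le:bounded_degree}, and your argument is just a careful unpacking of that (the $X$-degree bound from $\deg_X r < \deg_X R = q^k$, and the coefficient bounds $\deg h_i(Y)\le i$ from $\deg\rho_i(Y)\le i$, with the subtraction of $X$ affecting only the index $i=q^n-1$). No issues.
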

\begin{proof}
This follows from $H(X,Y) = -r(X,Y)-X$ and $\deg r(X,Y) \leq q^n$ by Lemma~\ref{le:bounded_degree}.
\end{proof}

\begin{remark}
Since $\deg_X H(X,Y) \leq q^k-1$, the polynomials $h_i(Y)$ are identically zero for $i \in \{0,\ldots,q^n-q^k+1\}$. In \cite{peter}, it is also mentioned
how the coefficient polynomials $h_i(Y)$ can be computed from the polynomials $\sigma_i(Y)$ and $\sigma_i^*(Y)$.
\end{remark}

The following lemma is essentially Lemma 15 from \cite{peter}, where the authors prove a similar theorem assuming $\infty \in \D_\A$, whereas in our case $\infty\notin \D_\A$.

\begin{lemma} \label{graad} Let $R(X,Y)$ be the R\'edei polynomial of the point set $\A=\{\la(1,x,f(x))\ra_{q^n}\mid x\in V\}$,
and $H(X,Y)$ the polynomial defined in Equation~\ref{pol:h}. Then 
the number of points in $L_U=\{\la (x,f(x)\ra_{q^n}\mid x \in V^*\}$ is at least $\deg_X H(X,Y)$.
\end{lemma}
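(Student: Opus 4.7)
The plan is to combine Lemma~\ref{richtingen}, which identifies the number of points of $L_U$ with $|\D_\A|$, with a root-counting argument in $Y$ applied to the coefficients of $H(X,Y)+X$. The core idea is: for every non-determined slope $y_0\in\F_{q^n}\setminus\D_\A$, the polynomial $H(X,y_0)+X$ should vanish identically in $X$; once this is established, Corollary~\ref{graadhs} furnishes a degree comparison that forces $|\D_\A|\geq\deg_X H(X,Y)$.

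For the first step I would fix $y_0\in\F_{q^n}\setminus\D_\A$. Since $y_0$ is not determined, the $\F_q$-linear map $x\mapsto -xy_0+f(x)$ has trivial kernel on $V$, so $V_{y_0}=\{-xy_0+f(x)\mid x\in V\}$ is a $k$-dimensional $\F_q$-subspace of $\F_{q^n}$, and $R(X,y_0)=\prod_{\beta\in V_{y_0}}(X-\beta)$ splits into $q^k$ pairwise distinct linear factors with roots in $\F_{q^n}$. Hence $R(X,y_0)\mid X^{q^n}-X$. Substituting $Y=y_0$ in \eqref{pol:h} and combining with this divisibility yields $R(X,y_0)\mid H(X,y_0)+X$. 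By Corollary~\ref{graadhs}, $\deg_X(H(X,y_0)+X)\leq q^k-1<q^k=\deg_X R(X,y_0)$, so the quotient must vanish and $H(X,y_0)+X=0$ in $\F_{q^n}[X]$.

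For the second step I would expand $H(X,Y)+X=\sum_{j} c_j(Y)X^j$. The previous step shows that each $c_j(Y)$ vanishes at every $y_0\in\F_{q^n}\setminus\D_\A$, so $c_j(Y)$ has at least $q^n-|\D_\A|$ roots. Let $d=\deg_X H(X,Y)$; assuming $d\geq 2$, the coefficient $c_d(Y)$ coincides with the coefficient $h_{q^n-d}(Y)$ of $X^d$ in $H(X,Y)$, which is nonzero by the choice of $d$ and has degree at most $q^n-d$ by Corollary~\ref{graadhs}. The root count then forces $q^n-d\geq q^n-|\D_\A|$, i.e., $|\D_\A|\geq d$. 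The cases $d\in\{0,1\}$ are trivial since $L_U\neq\emptyset$ gives $|\D_\A|\geq 1$ via Lemma~\ref{richtingen}. Lemma~\ref{richtingen} finally converts $|\D_\A|$ into the number of points of $L_U$, finishing the proof.

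The main obstacle I anticipate is the degree bookkeeping near the top $X$-power: one must verify that the extra summand $+X$ perturbs only the coefficient of $X^1$, so that for $d\geq 2$ the top coefficient of $H(X,Y)+X$ remains exactly $h_{q^n-d}(Y)$ with the Corollary~\ref{graadhs} bound of degree $q^n-d$, matching the root count $q^n-|\D_\A|$ and making the final inequality tight. Apart from this verification and the trivial low-$d$ cases, the argument reduces to a routine polynomial identity manipulation.
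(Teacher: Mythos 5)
Your proposal is correct and follows essentially the same route as the paper: both reduce to counting directions via Lemma~\ref{richtingen}, show that $H(X,y_0)=-X$ for every non-determined slope $y_0$, and then compare the degree bound $\deg h_{q^n-d}(Y)\leq q^n-d$ from Corollary~\ref{graadhs} with the at least $q^n-|\D_\A|$ roots of the leading coefficient. Your explicit treatment of the cases $d\in\{0,1\}$ (where the leading coefficient of $H(X,Y)+X$ could be perturbed by the $+X$ term) is a small point the paper glosses over, but it does not change the argument.
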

\begin{proof} 
By Lemma \ref{richtingen}, the number of points in $L_U$ is the number of 
directions determined by the point set $\A=\{\la (1,x,f(x))\ra_{q^n} \mid x \in V\}$, 
where $f$ is an $\F_q$-linear map and $V$ is an $\F_q$-vector 
subspace of $\F_{q^n}$. 

Recall that the slope $y \in \D_\A$ corresponds with the direction $\la (0,1,y) \ra_{q^n}$, and that
$\la (0,0,1) \ra_{q^n}$ is not a direction determined by $\A$. Assume now that $y \not \in \D_\A$, then
$R(X,y) \mid X^{q^n}-X$, hence $H(X,y) = -X$, from which it follows that $h_i(y) = 0$ for all $i \neq q^n-1$.

Now assume that $y \in \D_\A$. Then $R(X,y)  \nmid X^{q^n}-X$, so there exists an index $j \neq q^n-1$ 
such that $h_j(y) \neq 0$, hence $h_j(Y) \not \equiv 0$. Define $i_0$ to be the smallest index such that 
$h_{i_0}(Y) \not \equiv 0$, then \begin{equation}i_0 = q^n - \deg_X H(X,Y).\label{g}\end{equation} The polynomial $h_{i_0}(Y)$ has at least $q^n - |\D_\A|$ roots,
so \begin{equation}\deg h_{i_0}(Y) \geq q^n - |\D_\A|.\label{h}\end{equation}

From Corollary \ref{graadhs}, we have that $\deg h_{i_0}\leq i_0$, which implies that $q^n \geq \deg (X^{q^n-i_0}h_{i_0}(Y))$. By Equation \ref{h}, 
$$\deg (X^{q^n-i_0}h_{i_0}(Y))= q^n-i_0+\deg h_{i_0}(Y) \geq 2q^n - |\D_\A|-i_0.$$ 
Combining the above two inequalities, we obtain that $q^n\geq 2q^n - |\D_\A|-i_0$ and we find by Equation \ref{g} that $|\D_\A|\geq q^n - i_0 = \deg_X H(X,Y)$.

\end{proof}

\begin{remark} From Lemma \ref{graad}, we could have deduced that the number of points in $L_U$ is at least $\deg_X H(X,Y)+1$ (just like in \cite{peter}) instead of the slightly weaker lower bound $\deg_X H(X,Y)$ that we now have. However, in order to obtain this impovement, we would have needed to transform our point set so that $\infty$ is a determined direction. While it is perfectly possible to do so, we chose to avoid doing this as in the proof of Theorem \ref{hoofd}, we will obtain the same lower bound $\deg_X H(X,Y)+1$ anyhow.
\end{remark}

We will need the following result, which easily follows from the geometric point of view on $\F_q$-linear sets.
\begin{result}\label{olga}\cite{olga}
The number of points in an $\F_q$-linear set is congruent to $1$ mod $q$.
\end{result}

\begin{theorem}\label{hoofd} Let $L_U=\{\la (x,f(x))\ra_{q^n}\mid x \in V^*\}$, where $V$ has dimension $k$, be an $\F_q$-linear set in $\PG(1,q^n)$ of rank $k$ which 
contains at least one point of weight one, then the size of $L_U$ is at least $q^{k-1}+1$.
\end{theorem}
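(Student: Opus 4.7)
The plan is to bound $|\D_\A|$ from below using the machinery of Section~\ref{sec:line}, then use the divisibility statement in Result~\ref{olga} to pick up a $+1$. By Lemma~\ref{richtingen}, $|L_U|=|\D_\A|$, and by Lemma~\ref{graad}, $|\D_\A|\geq \deg_X H(X,Y)$. I will show that the weight-one assumption forces
\[
\deg_X H(X,Y)\geq q^{k-1},
\]
which combined with Result~\ref{olga} and $q\mid q^{k-1}$ (for $k\geq 2$) yields $|L_U|\geq q^{k-1}+1$. (The case $k=1$ is trivial or vacuous.)

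Let $P=\la(x_0,f(x_0))\ra_{q^n}$ be a point of $L_U$ of weight one, and set $y_0=f(x_0)/x_0$. By Lemma~\ref{Rsplits},
\[
R(X,y_0)=g(X)^q,
\]
where $g(X)=\prod_{\beta\in V_{y_0}}(X-\beta)$ and $V_{y_0}=\{-xy_0+f(x)\mid x\in V\}$ is an $\F_q$-subspace of $\F_{q^n}$ of dimension $k-1$. Because $V_{y_0}$ is an $\F_q$-subspace, $g$ is a linearised polynomial of degree $q^{k-1}$, and in particular $g(X)\mid X^{q^n}-X$.

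The main step is the following contradiction argument. Substituting $Y=y_0$ in Equation~\ref{pol:h} gives $X^{q^n}=g(X)^q\, Q(X,y_0)-H(X,y_0)$, so that $-H(X,y_0)$ is the remainder of $X^{q^n}$ modulo $g(X)^q$. Suppose, for contradiction, that $\deg_X H(X,y_0)<q^{k-1}$. Reducing the previous identity modulo $g(X)$ and using $X^{q^n}\equiv X\pmod{g(X)}$ yields
\[
g(X)\mid H(X,y_0)+X.
\]
Since $\deg\bigl(H(X,y_0)+X\bigr)<q^{k-1}=\deg g$ (here I use $k\geq 2$, so $q^{k-1}\geq q\geq 2$), this forces $H(X,y_0)=-X$ identically. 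Plugging back in, $g(X)^q\mid X^{q^n}-X$, contradicting the fact that $X^{q^n}-X$ is separable while $g$ is non-constant. Hence $\deg_X H(X,Y)\geq \deg_X H(X,y_0)\geq q^{k-1}$, and chaining Lemmas~\ref{richtingen}, \ref{graad} and Result~\ref{olga} finishes the proof. The main obstacle is precisely locating the arithmetic obstruction that converts the weight-one hypothesis into a lower bound on the remainder's degree; the rest is routine bookkeeping built on the polynomial $H(X,Y)$ prepared in the preceding lemmas.
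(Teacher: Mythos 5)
Your proof is correct and follows essentially the same route as the paper: specialize at the slope $y_0$ of a weight-one point, use Lemma~\ref{Rsplits} to see that $R(X,y_0)$ is the $q$-th power of a separable linearised polynomial of degree $q^{k-1}$, deduce $\deg_X H(X,Y)\geq q^{k-1}$, and finish with Lemma~\ref{graad} and Result~\ref{olga}. If anything, your contradiction argument is slightly more careful than the paper's, since it explicitly rules out the degenerate possibility $H(X,y_0)=-X$ (via the separability of $X^{q^n}-X$), a case that the paper's step ``$q^{k-1}$ distinct linear factors divide $H(X,y_0)-X$, hence $\deg_X H\geq q^{k-1}$'' passes over silently.
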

\begin{proof} 
With $R(X,Y)$ the R\'edei-polynomial of $\A=\{\la (1,x,f(x))\ran \mid x \in V^*\}$, and $H(X,Y)$ defined as in \eqref{pol:h}, 
by Lemma \ref{graad}, we know that the number of points in $L_U$ is at least $\deg_X H(X,Y)$. 
Let $P=\la (x_0,f(x_0)\ran$ be a point of weight one in $L_U$. By Lemma \ref{Rsplits}, $R(X,y_0)$ 
with $y_0=f(x_0)/x_0$ splits in factors of degree $q$, and since $R(X,y_0)$ has degree $q^k$, there are $q^{k-1}$ 
different factors, each of the form $(X-\alpha_i)^q$ for some $\alpha_i\in \F_{q^n}$, $i=1,\ldots,q^{k-1}$. 
Since $X-\alpha_i$ divides $X^{q^n}-X$, it divides $H(X,y)-X$ as well. As we have found at least 
$q^{k-1}$ different linear factors dividing $H(X,y)-X$, this implies that $\deg_X H(X,Y)$ is at least 
$q^{k-1}$. We conclude that the number of points in $L_U$ is at least $q^{k-1}$, and hence, 
by Lemma \ref{olga}, at least $q^{k-1}+1$.
\end{proof}


In Theorem \ref{hoofd}, we find that the number of points in an $\F_q$-linear set of rank $k$ in $\PG(1,q^n)$, containing a point of weight one, is at least $q^{k-1}+1$. In the following proposition, we see that we can always find an example of such an $\F_q$-linear set, and hence, that this lower bound is sharp.
\begin{proposition}\label{vbtrace} Let $2\leq k\leq n$. There exists an $\F_q$-linear set of rank $k$ in $\PG(1,q^n)$ with $q^{k-1}+1$ elements.
\end{proposition}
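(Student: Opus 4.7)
The plan is to exhibit an explicit $\F_q$-linear set of rank $k$ attaining the bound of Theorem~\ref{hoofd}. Motivated by the rank-$n$ case, the natural candidate is the one obtained from the trace. Write $\Tr=\Tr_{q^n/q}:\F_{q^n}\to\F_q$. Since $\ker\Tr$ has $\F_q$-codimension $1$ in $\F_{q^n}$ and $k\leq n$, one can fix some $\alpha\in\F_{q^n}$ with $\Tr(\alpha)\neq 0$ and then choose any $\F_q$-subspace $V\subseteq\F_{q^n}$ of dimension $k$ containing $\alpha$. Set $f:=\Tr|_V:V\to\F_q\subseteq\F_{q^n}$, which is $\F_q$-linear, and form the linear set $L_U=\{\la (x,f(x))\ra_{q^n}\mid x\in V^\ast\}$, which has rank $k$.

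The key step is to count the points of $L_U$ by splitting $V^\ast$ according to the value of $\Tr$. Since $f$ is a nonzero $\F_q$-linear form onto $\F_q$, its kernel $V_0$ is a $(k-1)$-dimensional $\F_q$-subspace of $V$, and every $x\in V_0\setminus\{0\}$ produces the single point $\la (1,0)\ra_{q^n}$. On the complement $V\setminus V_0$, of size $q^k-q^{k-1}$, the associated point is $\la (x,\Tr(x))\ra_{q^n}=\la (x/\Tr(x),1)\ra_{q^n}$, since $\Tr(x)\in\F_q^\ast\subseteq\F_{q^n}^\ast$. Two vectors $x_1,x_2\in V\setminus V_0$ determine the same projective point exactly when $x_1/\Tr(x_1)=x_2/\Tr(x_2)$, which after cross-multiplying and using that the resulting quotient lies in $\F_q$ is equivalent to $x_2=cx_1$ for some $c\in\F_q^\ast$. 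Hence the point count on $V\setminus V_0$ equals the number of $\F_q^\ast$-orbits, namely $(q^k-q^{k-1})/(q-1)=q^{k-1}$.

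Summing the two contributions gives exactly $q^{k-1}+1$ distinct points of $L_U$, the point $\la (1,0)\ra_{q^n}$ being distinguished from the others by its vanishing second coordinate. There is no real obstacle here: once $V$ with $\Tr|_V\not\equiv 0$ is chosen, the count reduces to an elementary $\F_q^\ast$-orbit calculation. As a sanity check, the identity $(q^{k-1}-1)+q^{k-1}(q-1)=q^k-1$ shows that $L_U$ consists of one point of weight $k-1$ together with $q^{k-1}$ points of weight one, so the hypothesis of Theorem~\ref{hoofd} on the existence of a weight-one point is automatically satisfied and the lower bound is attained.
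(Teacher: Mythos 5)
Your proof is correct, and it takes a genuinely different route from the paper's. The paper argues entirely in the field-reduction model: it takes a $(k-1)$-space $\pi$ of $\PG(2n-1,q)$ meeting one element $S_1$ of the Desarguesian $(n-1)$-spread in a $(k-2)$-space $\mu$, and concludes that $\B(\pi)$ consists of $S_1$ together with one further spread element for each of the $q^{k-1}$ points of $\pi\setminus\mu$ (any spread element other than $S_1$ is disjoint from $S_1\supseteq\mu$, so it cannot meet $\pi$ in more than a point, since a line of $\pi$ would have to meet the hyperplane $\mu$). Your construction is precisely the coordinate version of that example which the paper records only in the remark \emph{after} the proposition (a $k$-dimensional $V$ on which $\Tr$ does not vanish, with $f=\Tr|_V$), and you verify the size directly: the kernel of $\Tr|_V$ contributes the single point $\la(1,0)\ra_{q^n}$, and the complement contributes $(q^k-q^{k-1})/(q-1)=q^{k-1}$ points via a clean $\F_q^\ast$-orbit count. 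Both arguments are complete; yours is elementary and self-contained, avoids spread geometry, and has the added benefit of exhibiting the weight distribution explicitly (one point of weight $k-1$ and $q^{k-1}$ points of weight one, so the hypothesis of Theorem~\ref{hoofd} is visibly met and the bound is attained), while the paper's is shorter and coordinate-free.
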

\begin{proof} As usual, consider the Desarguesian $(n-1)$-spread $\S$ in $\PG(2n-1,q)$. Take a $(k-2)$-space $\mu$ contained in a spread element $S_1$ of $\S$ and let $\pi$ be a $(k-1)$-space meeting $S_1$ exactly in $\mu$. Then $\B(\mu)$ has size $q^{k-1}+1$.
\end{proof}

\begin{remark} An example of a set $\B(\pi)$ from Proposition $\ref{vbtrace}$ can be obtained using coordinates as follows: take $\alpha_0=1,\alpha_1,\ldots,\alpha_{n-k}$ to be $\F_q$-linearly independent elements of $\F_{q^n}$, let $V$ be the vector space of $\F_{q^n}$ defined by $\Tr(\alpha_i x)=0$, for $i=1,\ldots,n-k$ and put $L_U=\{\la (x,\Tr(x))\ran \mid x\in V^*\}$.

However, not every $\F_q$-linear set of size $q^{k-1}+1$ arises as in Proposition \ref{vbtrace}. For example, in $\PG(1,q^4)$, it is possible to find two non-equivalent $\F_q$-linear sets of rank $4$, each containing $q^3+1$ points (see Example B1 and C12 of \cite{olga2}). The example of Proposition \ref{vbtrace} arises as $\B(\pi)$, where $\pi$ is a $3$-space meeting one element of the Desarguesian $3$-spread $\S$ of $\PG(7,q)$ in a plane, and $q^3$ other elements in a point. The other example arises as $\B(\pi)$ where $\pi$ meets $q+1$ elements of a regulus of $\S$ in a line and $q^3-q$ others in a point.
\end{remark}

\begin{remark} In \cite[Lemma 2.2]{olga2}, the authors prove that a linear set $L_U$ of rank $n$ in $\PG(1,q^n)$ containing at least one point of weight $1$ has size at least $q^{n-1}+1$, and they show that $U$ is spanned by the vectors of $U$ defining the points of weight one in $L_U$. 
Now consider a linear set $L_U$ of rank $k$ in $\PG(1,q^n)$ containing at least one point of weight one. By Theorem \ref{hoofd}, $L_U$ has at least $q^{k-1}+1$ points. Using this result, it is easy to see that the proof of the second part of \cite[Lemma]{olga2} goes through for $k<n$, and we obtain that also in this case, $U$ is spanned by the vectors defining the points of weight $1$.
\end{remark}

Looking at the proof of Theorem \ref{hoofd}, one might think that for $\F_q$-linear sets of rank $k$ with more than $q^{k-1}+1$ points, the lower bound on $\deg_X H(X,Y)$ could be improved. This is not the case: we will show in Corollary \ref{graadzelfde} that $\deg_X H(X,Y)$ is independent of the choice of the $\F_q$-linear set of rank $k$ (as long as it has a point of weight one).

For this, we need the {\em symbolic} product of linearised polynomials, which is defined as their composition and denoted by $\circ$. More precisely, let $F(x)$ and $G(x)$ be two $\F_q$-linearised polynomials, then 
\[ (F\circ G)(x):=F(G(x))\ \mod x^{q^n}-x.\]
Unlike the ordinary product of two linearised polynomials, the composition of two linearised polynomials is again a linearised polynomial. A linearised polynomial $G(x)$ is called a (right) symbolic divisor of a linearised polynomial $F(x)$ if $F(x)=Q(x)\circ G(X)$ for some linearised polynomial $Q(x)$. With respect to symbolic (right) division, one can execute Euclid's algorithm (see \cite{ore}). So for any two linearised polynomials $F(x)$ and $G(x)$ with $\deg(G)\leq \deg(F)$, there are linearised polynomials $Q(x)$ and $H(x)$ with $\deg(H)<\deg(G)$, such that
\begin{equation}F(x)=Q(x)\circ G(x)+H(x).\label{ore}\end{equation}

\begin{proposition}\label{machtvanq} Let $\A=\{\la (1,x,f(x))\ran \mid x\in V\}$, where $f$ is an $\F_q$-linear map and $V$ is a $k$-dimensional subspace of $\F_{q^n}$. Let $R$ be the R\'edei-polynomial of $\A$, and $Q$ and $H$ as in Equation \ref{pol:h}. Then
$\deg_X H(X,Y)$ is a power of $q$.
\end{proposition}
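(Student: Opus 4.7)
The plan is to prove the stronger statement that for every $y \in \F_{q^n}$, the specialization $H(X,y) \in \F_{q^n}[X]$ is an $\F_q$-linearized polynomial in $X$. Granting this, the proposition follows quickly: writing $H(X,Y) = \sum_{i=0}^{q^n} h_i(Y) X^{q^n - i}$ as in Corollary~\ref{graadhs}, the linearity of $H(X,y)$ forces $h_i(y) = 0$ for every $y \in \F_{q^n}$ and every $0 < i < q^n$ with $q^n - i \notin \{q^0, q^1, \ldots, q^{n-1}\}$. Since $\deg h_i \leq i < q^n$ by Corollary~\ref{graadhs}, each such $h_i$ has more roots than its degree and therefore vanishes identically. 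Hence every nonzero $X$-monomial of $H(X,Y)$ has exponent a power of $q$, and $\deg_X H(X,Y)$ is a power of $q$.

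To prove the claim, I would apply Ore's symbolic Euclidean division in the (noncommutative) ring of $\F_q$-linearized polynomials in $X$, treating $Y$ as a parameter and letting coefficients live in $\F_{q^n}[Y]$. By Lemma~\ref{vormR}, $R(X,Y)$ is itself $\F_q$-linearized and monic in $X$, so each reduction step has the form
\[
(\text{current remainder}) \longmapsto (\text{current remainder}) - a(Y)\, R(X,Y)^{q^m},
\]
where $a(Y) R(X,Y)^{q^m} = (a(Y) X^{q^m}) \circ R(X,Y)$. No inversion of a coefficient is ever required, so the algorithm terminates inside $\F_{q^n}[Y][X]$ and yields an identity
\[
X^{q^n} - X \;=\; \tilde Q(X,Y) \circ R(X,Y) + \tilde H(X,Y),
\]
with $\tilde Q, \tilde H \in \F_{q^n}[Y][X]$ linearized in $X$ and $\deg_X \tilde H < q^k$.

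Finally I would specialize at an arbitrary $y \in \F_{q^n}$. Writing $\tilde Q(X,Y) = \sum_i a_i(Y) X^{q^i}$, the composition $\tilde Q(X,y) \circ R(X,y) = \sum_i a_i(y) R(X,y)^{q^i}$ is divisible by $R(X,y)$ as an ordinary polynomial in $X$; hence the specialized identity is an ordinary polynomial division of $X^{q^n}-X$ by $R(X,y)$ with remainder $\tilde H(X,y)$ of $X$-degree less than $q^k$. Comparing with Equation~\ref{pol:h} evaluated at $Y = y$ and invoking uniqueness of the ordinary Euclidean remainder in $\F_{q^n}[X]$ gives $\tilde H(X,y) = -H(X,y) - X$. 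Therefore $H(X,y) = -\tilde H(X,y) - X$ is $\F_q$-linearized in $X$, establishing the claim.

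The main technical obstacle I foresee is justifying Ore's symbolic division over the coefficient ring $\F_{q^n}[Y]$, which is not a field. Monicity of $R(X,Y)$ in $X$ resolves this: the algorithm only ever subtracts multiples of $R(X,Y)^{q^m}$ and never needs to invert a coefficient, so symbolic and ordinary division give compatible remainders after any specialization $Y = y$.
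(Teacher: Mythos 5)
Your proposal is correct and follows essentially the same route as the paper: perform Ore's symbolic division of $X^{q^n}-X$ by $R$, rewrite the composition $\tilde Q\circ R$ as an ordinary multiple of $R$, identify the symbolic remainder with the remainder of Equation~\ref{pol:h} by uniqueness of Euclidean division, conclude that $H(X,y)$ is linearized for every $y$, and then kill the coefficient polynomials $h_i(Y)$ at non-power-of-$q$ exponents by counting roots against $\deg h_i\leq i$. The only (cosmetic) difference is that you run the symbolic division once over $\F_{q^n}[Y]$ using monicity of $R$ in $X$, whereas the paper runs it separately for each fixed $y\in\F_{q^n}$ over the field $\F_{q^n}$, which sidesteps the coefficient-ring issue you flag; your final root-counting step is in fact spelled out more explicitly than in the paper's own proof.
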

\begin{proof}

Pick an element $y\in \F_{q^n}$ and write $R_y(X)=R(X,y)$. By Lemma \ref{vormR}, $R(X,y)$ is a linearised polynomial. 
Note that $X^{q^n}-X$ is a linearised polynomial as well. 
So we can symbolically divide $X^{q^n}-X$ by $R_y(X)$ and find (see Equation \ref{ore}):

$$X^{q^n}-X=\widetilde{Q}_y(X)\circ R_y(X)+\widetilde{H}_y(X)=\widetilde{Q}_y(X)\circ R_y(X)-H_y'(X)-X,$$ for some linearised polynomials $\widetilde{Q}_y(X)$ and $\widetilde{H}_y(X)$ where $\deg H'\leq \deg \widetilde{H}<\deg R_y$. Note that $H'_y(X)=-\widetilde{H}_y(X)-X$ is a linearised polynomial as well and that the polynomials $\widetilde{Q}_y$ and $H_y'$ are dependent on the choice of $y$.

Write $\widetilde{Q}_y(X)=\sum_{i=0}^{n-k} \widetilde{Q}_{y,i} X^{q^i}$.
Now \begin{align*}
X^{q^n}-X&=\widetilde{Q}_{y}(X)\circ R_y(X)-H_y'(X)-X\\
&=\widetilde{Q}_{y}(R_y(X))-H_y'(X)-X\\
&=\sum_{i=0}^{n-k} \widetilde{Q}_{y,i}(R_y(X))^{q^i}-H_y'(X)-X\\
&=R_y(X)\sum_{i=0}^{n-k} \widetilde{Q}_{y,i}(R_y(X))^{q^i-1}-H_y'(X)-X
\end{align*} 
We find that 
$$X^{q^n}-X=R_y(X)\sum_{i=0}^{n-k} \widetilde{Q}_{y,i}((R_y(X))^{q^i-1}-H'_y(X)-X=R_y(X)Q'_y(X)-H'_y(X)-X$$
where we defined $Q'_y(X)=\sum_{i=0}^{n-k} \widetilde{Q}_{y,i}((R_y(X))^{q^i-1}$.

However, we know that, for a fixed $y$, 
$$X^{q^n}-X=R_y(X)Q(X,y)-H(X,y)-X.$$

This implies that for every $y\in \F_{q^n}$, 
$$X^{q^n}-X=R_y(X)Q(X,y)-H(X,y)-X=R_y(X)Q'_y(X)-H'_y(X)-X, $$ or
$$R_y(X)(Q (X,y)-Q'_y(X))=H(X,y)-H'_y(X).$$

But, if the polynomials in this equation are non-zero polynomials, then the degree of the left hand side is at least $\deg_X R(X,Y)$ 
and the degree of the right hand side is less than $\deg_X R(X,Y)$ since $H$ and $H'$ have degrees less than $\deg_X R(X,Y)$. 
Hence, for all $y$,

$$Q(X,y)=Q'_y(X)\ \mathrm{and}\ H(X,y)=H'_y(X).$$

Since for all $y\in \F_{q^n}$, $H(X,y)=H'_y(X)$ is a linearised polynomial in the variable $X$, and 
$\deg H(X,Y)<q^n$ (see Corollary \ref{graadhs}), we have that $H(X,y)$ is a linearised polynomial 
which means that $\deg_X H(X,Y)=q^i$ for some $i$.
\end{proof}

\begin{corollary} \label{graadzelfde} Let $L_U=\{\la(x,f(x))\ran\mid x \in V^*\}$ be an $\F_q$-linear set of rank $k$ containing a point of weight $1$. Let $\A=\{\la (1,x,f(x))\ran \mid x\in V\}$, let $R$ be the R\'edei polynomial of $\A$ and let $Q$ and $H$ be as in Equation \ref{pol:h}. Then $\deg_X H(X,Y)=q^{k-1}$.
\end{corollary}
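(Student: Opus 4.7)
The plan is to sandwich $\deg_X H(X,Y)$ between $q^{k-1}$ and $q^k$, and then use the fact from Proposition~\ref{machtvanq} that it must be a power of $q$ to conclude equality with $q^{k-1}$.

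First I would extract the lower bound from the proof of Theorem~\ref{hoofd}. There, the hypothesis of a weight-$1$ point $P = \langle(x_0, f(x_0))\rangle_{q^n}$ is used as follows: by Lemma~\ref{Rsplits}, $R(X, y_0)$ with $y_0 = f(x_0)/x_0$ factors as a product of $q^{k-1}$ distinct $q$-th powers $(X-\alpha_i)^q$, and since each linear factor $X - \alpha_i$ divides $X^{q^n} - X$, the identity $X^{q^n} - X = R(X,Y)Q(X,Y) - H(X,Y) - X$ evaluated at $Y = y_0$ shows that each of these $q^{k-1}$ distinct $\alpha_i$ is a root of $H(X,y_0) - X$. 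Hence $\deg_X H(X,Y) \geq q^{k-1}$.

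Next I would invoke the general upper bound: Corollary~\ref{graadhs} (or equivalently the construction of $H$ from $r$ in Lemma~\ref{le:bounded_degree}) gives $\deg_X H(X,Y) \leq q^k - 1$. So we have
\[
q^{k-1} \;\leq\; \deg_X H(X,Y) \;\leq\; q^k - 1.
\]

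Finally, Proposition~\ref{machtvanq} tells us $\deg_X H(X,Y) = q^i$ for some non-negative integer $i$. The double inequality $q^{k-1} \leq q^i < q^k$ forces $i = k-1$, and we conclude $\deg_X H(X,Y) = q^{k-1}$. No step here looks difficult, since all three ingredients are already proved above; the only thing to be careful about is making explicit that the lower bound $q^{k-1}$ from the Theorem~\ref{hoofd} argument indeed comes from linear factors of $H(X,y_0) - X$ (and not just from $R(X,y_0)$), which is precisely what is used there.
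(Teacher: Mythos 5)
Your proposal is correct and follows essentially the same route as the paper: the lower bound $q^{k-1}$ extracted from the weight-one point via Lemma~\ref{Rsplits} (as in the proof of Theorem~\ref{hoofd}), the upper bound $\deg_X H(X,Y)<\deg_X R(X,Y)=q^k$, and Proposition~\ref{machtvanq} to force the degree to be the power $q^{k-1}$. Nothing further is needed.
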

\begin{proof}
By Proposition \ref{machtvanq}, $\deg_X H(X,Y)$ is a power of $q$. But $\deg_X H(X,Y)<\deg_X R(X,Y)=q^k$, which shows that $\deg_X H(X,Y)\leq q^{k-1}$. In the proof of Theorem \ref{hoofd}, we have seen that $\deg_X H(X,Y)$ is at least $q^{k-1}$. Hence, $\deg_X H(X,Y)=q^{k-1}$.
\end{proof}

\section{The size of an $\F_q$-linear (blocking) set in $\PG(2,q^n)$}\label{sec:appl}

In this section, we will extend the results found for linear sets on a line in Theorem \ref{hoofd}, to linear sets in a plane.

\begin{theorem}\label{hoofd2} Let $L$ be an $\F_q$-linear set of rank $k$ in $\PG(2,q^n)$ such that there is at least one line of $\PG(2,q^n)$ meeting $L$ in exactly $q+1$ points, then $L$ contains at least $q^{k-1}+q^{k-2}+1$ points.
\end{theorem}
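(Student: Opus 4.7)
The plan is to pick a point $P \in L \cap \ell$ of weight $1$ in $L$, project $L$ from $P$ onto a line $\ell'$ not through $P$ to obtain an $\F_q$-linear set $L'$ on $\ell'$ of rank $k-1$, and invoke Theorem~\ref{hoofd} twice: first to bound $|L \cap PQ'|$ for each $Q' \in L'$, and then to bound $|L'|$ itself. The main obstacle is the initial step: the hypothesis only controls $|L \cap \ell|$, so I first need to show that $L \cap \ell$ is actually a subline of order $q$ whose points all have weight $1$ in $L$. Let $r$ denote the rank of $L \cap \ell$ and $w_Q$ the weight of a point $Q$ in $L \cap \ell$. The identity $\sum_Q (q^{w_Q}-1) = q^r - 1$, reduced modulo $q^2$, becomes $-(q+1) \equiv -1 \pmod{q^2}$ if every $w_Q \geq 2$, which is absurd. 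So some point of $L \cap \ell$ has weight $1$, and Theorem~\ref{hoofd} applied to $L \cap \ell$ then forces $q+1 \geq q^{r-1}+1$, whence $r = 2$; consequently all $q+1$ points of $L \cap \ell$ have weight $1$ in $L \cap \ell$, and this coincides with their weight in $L$ because the $\F_{q^n}$-line representing any such point is contained in the $2$-dimensional $\F_{q^n}$-subspace defining $\ell$.

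Now pick any $P \in L \cap \ell$ and any line $\ell'$ not through $P$, and let $L'$ be the projection of $L$ from $P$ onto $\ell'$. Writing $L = L_U$ with $U \subset \F_{q^n}^3$ of $\F_q$-dimension $k$, the fact that $P$ has weight $1$ makes the image of $U$ in $\F_{q^n}^3/\langle p\rangle_{q^n}$ have $\F_q$-dimension $k-1$, so $L'$ is an $\F_q$-linear set of rank $k-1$ on $\ell' \cong \PG(1,q^n)$. For each $Q' \in L'$, let $w_{Q'}$ denote its weight in $L'$; then $L \cap PQ'$ has rank $w_{Q'}+1$, and $P$ is a weight-$1$ point of it, so Theorem~\ref{hoofd} gives $|L \cap PQ'| \geq q^{w_{Q'}}+1$. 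Summing the fiber sizes over $Q' \in L'$ and using the vector-count identity $\sum_{Q'\in L'}(q^{w_{Q'}}-1) = q^{k-1}-1$,
\[
|L| \;=\; 1 + \sum_{Q' \in L'}\bigl(|L \cap PQ'|-1\bigr) \;\geq\; 1 + \sum_{Q' \in L'} q^{w_{Q'}} \;=\; q^{k-1} + |L'|.
\]

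Finally, the point $R := \ell \cap \ell'$ lies in $L'$, and since the line through $P$ and $R$ is $\ell$ itself, its weight in $L'$ equals $(\text{rank of } L \cap \ell) - 1 = 1$. Hence $L'$ is a rank-$(k-1)$ linear set on $\PG(1,q^n)$ with a point of weight $1$, and Theorem~\ref{hoofd} gives $|L'| \geq q^{k-2}+1$. Combining with the display above yields $|L| \geq q^{k-1} + q^{k-2} + 1$, as required.
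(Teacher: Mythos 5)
Your proof is correct and follows essentially the same route as the paper's: both arguments project $L$ from a weight-one point $P$ on the $(q+1)$-secant, apply Theorem~\ref{hoofd} to the rank-$(k-1)$ projection (whose point $\ell\cap\ell'$ has weight one) and to each fiber $L\cap PQ'$, and then add up over the fibers. The only differences are presentational --- the paper works in the Desarguesian-spread model of $\PG(3n-1,q)$ and compares $L$ line-by-line with an auxiliary cone $\B(\pi')$ instead of your quotient-space fiber count --- plus the fact that you carefully justify (via the count $\sum_Q (q^{w_Q}-1)=q^r-1$ taken modulo $q^2$) the existence of a weight-one point on the $(q+1)$-secant, a step the paper only asserts parenthetically.
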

\begin{proof} 



As usual, let $\S$ be the Desarguesian $(n-1)$-spread in $\PG(3n-1,q)$. Recall that we identify a point of $\PG(2,q^n)$ with its corresponding element of $\S$.

Let $L=\B(\pi)$, where $\pi$ is a $(k-1)$-dimensional subspace of $\PG(3n-1,q)$.
Let $p_1$ and $p_2$ be points of $\pi$ such that the line $T=\la\B(p_1),\B(p_2)\ra$ of $\PG(2,q^n)$ meets $\B(\pi)$ in $q+1$ points (these are then exactly the $q+1$ points of the form $\B(r)$ with $r$ a point of the line $p_1p_2$ in $\PG(3n-1,q)$). Let $\widetilde{T}$ be the $(2n-1)$-space of $\PG(3n-1,q)$ corresponding to $T$, i.e., the subspace of $\PG(3n-1,q)$ spanned by the spread elements $\B(p_1)$ and $\B(p_2)$.

Consider a line $M$ of $\PG(2,q^n)$ through the point $\B(p_2)$, but not containing the point $\B(p_1)$. Then $M$ corresponds to a $(2n-1)$-dimensional subspace $\widetilde{M}$ of $\PG(3n-1,q)$ which is spanned by spread elements of $\S$. Project $\pi$ from $p_1$ onto $\widetilde{M}$. Since $\B(p_1)$, the spread element through $p_1$, meets $\pi$ in a point, the projection $\mu$ of the $(k-1)$-space $\pi$ from $p_1$ onto $\widetilde{M}$ is $(k-2)$-dimensional. Let $\pi'$ be the $(k-1)$-dimensional subspace of $\PG(3n-1,q)$ spanned by $p_1$ and $\mu$. The $\F_q$-linear set $\B(\pi')$ clearly contains $q^{k-1}+|\B(\mu)|$ points. Now $\B(\mu)$ contains the point $\B(p_2)$. The intersection of the spread element through $p_2$ with $\mu$ is precisely the projection of the intersection of the $(2n-1)$-dimensional space $\widetilde{T}$ with $\pi$. We know that $\widetilde{T}$ meets $\pi=\la p_1,\mu\ra$ in the line $p_1p_2$ so it follows that the spread element through $p_2$ meets $\mu$ only in the point $p_2$. Hence, $\B(p_2)$ is a point of weight $1$ in $\B(\mu)$ and by Theorem \ref{hoofd}, $\B(\mu)$ has size at least $q^{k-2}+1$. This shows that $\B(\pi')$ has at least $q^{k-1}+q^{k-2}+1$ points.

Now consider a line $N$ of $\PG(2,q^n)$ through $\B(p_1)$ and a point of $\B(\pi)$, different from $\B(p_1)$. The number of points of $\B(\pi)$ on $N$ is the number of points of $\B(\pi\cap \widetilde{N})$, where $\widetilde{N}$ is the $(2n-1)$-dimensional subspace of $\PG(3n-1,q)$ corresponding to $N$. Let $\nu=\pi\cap \widetilde{N}$, and suppose that $\nu$ is $r$-dimensional, then $\B(p_1)$ is a point of weight $1$ in $\B(\nu)$ and by Theorem \ref{hoofd}, $\B(\nu)$ has at least $q^{r-1}+1$ points. Note that $\nu$ is $r$-dimensional, and hence, $\pi'$ meets $\widetilde{N}$ in an $r$-dimensional space. By construction, this means that $\pi'\cap \widetilde{N}\cap \widetilde{M}$ is $(r-1)$-dimensional, and $\B(\pi'\cap \widetilde{N})$ has exactly $q^{r}+1$ points.  Hence, for every line $N$ through $\B(p_1)$, the number of points of $\B(\pi)\cap N$ is at least the number of points of $\B(\pi')\cap N$. We conclude that the number of points in $\B(\pi)$ is at least the number of points in $\B(\pi')$, which is at least $q^{k-1}+q^{k-2}+1$.

\end{proof}

Just as in Proposition \ref{vbtrace}, it is easy to see that the bound in Theorem \ref{hoofd2} is sharp.
\begin{proposition}\label{vbvlak} Let $3\leq k\leq n$. There exists an $\F_q$-linear set of rank $k$ in $\PG(2,q^n)$ with $q^{k-1}+q^{k-2}+1$ elements.
\end{proposition}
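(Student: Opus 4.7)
The idea is to lift the construction of Proposition \ref{vbtrace} one step up: produce a defining subspace $\pi$ by attaching to a rank-$(k-1)$ example on a line a single extra point outside it.

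Concretely, I would pick a line $M$ of $\PG(2,q^n)$ with associated $(2n-1)$-space $\widetilde{M}\subseteq\PG(3n-1,q)$. Applying Proposition \ref{vbtrace} with $k$ replaced by $k-1$ (which is admissible, since $2\leq k-1\leq n$) to the ambient line $M$, I obtain a $(k-2)$-space $\mu\subseteq\widetilde{M}$ with $|\B(\mu)|=q^{k-2}+1$. I would then choose any point $p\in\PG(3n-1,q)\setminus\widetilde{M}$ --- for concreteness, a point of a spread element $\B(P)$ with $P\in\PG(2,q^n)\setminus M$ --- and set $\pi:=\la\mu,p\ra$. Since $p\notin\widetilde{M}\supseteq\mu$, this is a $(k-1)$-dimensional subspace, so $\B(\pi)$ is a linear set of rank $k$ in $\PG(2,q^n)$; this will be the claimed example.

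To count $|\B(\pi)|$ I would split the points of $\B(\pi)$ according to whether they lie on $M$ or not. Since $\pi\cap\widetilde{M}=\mu$, the points of $\B(\pi)$ lying on $M$ are exactly those of $\B(\mu)$, contributing $q^{k-2}+1$ points. Every point of $\pi\setminus\mu$ can be written uniquely, up to an $\F_q^\ast$-scalar, in the form $\la m+p\ra$, with $m$ running through the $q^{k-1}$ vectors of the underlying $\F_q$-space of $\mu$. Hence $\pi\setminus\mu$ contains exactly $q^{k-1}$ points of $\PG(3n-1,q)$.

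The main (still short) calculation is to show that these $q^{k-1}$ points map to $q^{k-1}$ distinct points of $\PG(2,q^n)$, none of which lies on $M$. Suppose $\B(m_1+p)=\B(m_2+p)$; then $m_2+p=\gamma(m_1+p)$ for some $\gamma\in\F_{q^n}^\ast$, i.e.\ $(\gamma-1)p=m_2-\gamma m_1$. Since $\widetilde{M}$ is an $\F_{q^n}$-subspace of the ambient space $\F_q^{3n}\cong\F_{q^n}^3$, the right hand side lies in $\widetilde{M}$; the case $\gamma\neq 1$ would force $p\in\widetilde{M}$, contradicting the choice of $p$, so $\gamma=1$ and $m_1=m_2$. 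The same $\F_{q^n}$-closure argument shows $m+p\notin\widetilde{M}$ for any $m\in\mu$, so no point of the second family lies on $M$. Adding the two contributions gives $|\B(\pi)|=q^{k-1}+q^{k-2}+1$, matching the lower bound of Theorem \ref{hoofd2}. The only conceptual obstacle is keeping the $\F_q$-structure of $\PG(3n-1,q)$ and the $\F_{q^n}$-structure of $\PG(2,q^n)$ carefully separated; once $\F_{q^n}$-closure of $\widetilde{M}$ is invoked, everything falls into place.
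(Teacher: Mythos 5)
Your construction is correct and your verification is complete: since $\pi\cap\widetilde{M}=\mu$, the points of $\B(\pi)$ on $M$ are exactly those of $\B(\mu)$, and your $\F_{q^n}$-closure argument correctly shows that the $q^{k-1}$ points of $\pi\setminus\mu$ lie in $q^{k-1}$ pairwise distinct spread elements, none contained in $\widetilde{M}$, giving the total $q^{k-1}+q^{k-2}+1$. The paper's route is direct rather than recursive: it takes $\pi=\la\mu,\ell\ra$ with $\mu$ a $(k-3)$-space inside a spread element $\B(p)$ and $\ell$ a line inside the $(2n-1)$-space $\widetilde{N}$ of a line $N$ not through $\B(p)$, and simply asserts the point count ``by construction''. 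Your version instead bootstraps from Proposition \ref{vbtrace} applied to the spread induced on $\widetilde{M}$ (legitimate, since each element of $\S$ is either contained in or disjoint from $\widetilde{M}$) and then adjoins a single exterior point; structurally both yield a $(k-1)$-space meeting one spread element in a $(k-3)$-space and every other spread element in at most a point, so the examples are of the same flavour. What your route buys is an explicit proof of the cardinality claim, which the paper leaves implicit, and it mirrors the projection/cone template already used in Theorem \ref{hoofd2}; what the paper's direct choice buys is that the spanning of $\PG(2,q^n)$ and the existence of a $(q+1)$-secant are immediately visible (your example also spans, since $q^{k-2}+1\geq 2$ points lie on $M$ and $q^{k-1}\geq 1$ lie off it, but you do not remark on this).
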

\begin{proof} Let $\S$ be the Desarguesian $(n-1)$-spread in $\PG(3n-1,q)$. Let $\mu$ be a $(k-3)$-space of a spread element $\B(p)$ of $\S$. Consider a line $N$ in $\PG(2,q^n)$ skew from $\B(p)$ and let $\widetilde{N}$ be the $(2n-1)$-dimensional subspace of $\PG(3n-1,q)$ corresponding to $N$. Let $\ell$ be a line of $\widetilde{N}$. Then $\la \mu,\ell\ra$ is a $(k-1)$-dimensional subspace of $\PG(3n-1,q)$, so $\B(\la \mu,\ell\ra)$ is an $\F_q$-linear set of rank $k$. By construction, it spans $\PG(2,q^n)$ and contains $q^{k-1}+q^{k-2}+1$ points.
\end{proof}

\subsection*{Concluding remarks}
One particular instance for which the question of finding the minimum size of a linear set is very relevant, is for linear sets of rank $n$ in $\PG(2,q^n)$. In this case, the $\F_q$-linear set defines a minimal {\em blocking set}. A blocking set in $\PG(2,q^n)$ is a set $B$ of points such that every line of $\PG(2,q^n)$ meets $B$ in at least $1$ point. If a blocking set in $\PG(2,q^n)$ does not contain a line, it is called {\em non-trivial} and if it contains less than $3(q^n+1)/2$ points, it is called {\em small}. It is conjectured (see \cite[Conjecture 3.1]{peter1}) that all {\em small} minimal blocking sets in $\PG(2,q^n)$ are $\F_p$-linear sets if $q$ is a power of the prime $p$. In the same paper, the author conjectures the following:

\begin{conjecture} \cite[p.1170]{peter1}\label{conjecturemin}
Let $p$ be a prime. If $\F_{p^e}$ is the ``maximum field of linearity'' then a non-trivial blocking set in $\PG(2,p^t)$, with $t=en$, has at least $(p^e)^n+(p^e)^{n-1}+1$ points. 
\end{conjecture}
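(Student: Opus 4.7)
The plan is to reduce Conjecture \ref{conjecturemin} to Theorem \ref{hoofd2}, modulo the companion conjecture of Sziklai that every small minimal blocking set in $\PG(2,p^t)$ is $\F_p$-linear. Granting this structural hypothesis, the blocking set under consideration equals some $L_U$ with $U$ an $\F_{p^e}$-vector subspace of rank $k$, where $q=p^e$ is the maximum field of linearity and $t=en$. Under field reduction each line of $\PG(2,q^n)$ corresponds to a $(2n-1)$-dimensional $\F_q$-subspace spanned by spread elements of the Desarguesian $(n-1)$-spread $\S$ of $\PG(3n-1,q)$; since $L_U$ meets every line of $\PG(2,q^n)$, the $(k-1)$-space $\pi$ defining $L_U$ must meet every such $(2n-1)$-subspace non-trivially, and the usual hyperplane-dimension argument forces $k\geq n+1$.

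The next step would be to exhibit a line of $\PG(2,q^n)$ meeting $L_U$ in exactly $q+1$ points; once such a line is produced, Theorem \ref{hoofd2} immediately yields
\[
|L_U|\ \geq\ q^{k-1}+q^{k-2}+1\ \geq\ q^{n}+q^{n-1}+1,
\]
which is the bound in the conjecture. So the whole argument rests on producing a $(q+1)$-secant.

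The chief obstacle is precisely this existence problem. Maximality of $\F_q$ as the field of linearity gives that some line meets $L_U$ in a number of points not congruent to $1$ modulo $q^2$, so there is a secant of size $qm+1$ with $q\nmid m$; one must then push this to $m=1$. A first approach would be a proof by contradiction: assume $L_U$ admits no $(q+1)$-secant and try to show that every secant has intersection size $\equiv 1\pmod{q^2}$, so that $\F_{q^2}$ would already be a field of linearity, contradicting maximality. A second approach would be to extend Theorem \ref{hoofd2} by replacing the projection from a weight-one point on a $(q+1)$-secant by a projection from a higher-weight point on a $(q^j+1)$-secant, yielding weaker bounds that one then combines with a line-type counting argument over the admissible intersection numbers. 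Either route appears genuinely delicate, and I expect the \emph{no $(q+1)$-secant} case to be the essential difficulty in settling the conjecture; everything else follows from the machinery already in place.
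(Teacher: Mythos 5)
Your reduction is the same one the paper makes: grant Sziklai's linearity conjecture, so the blocking set is an $\F_q$-linear set $L_U$ of rank $k\geq n+1$ with $q=p^e$ the maximum field of linearity, and then invoke Theorem \ref{hoofd2} to get $|L_U|\geq q^{k-1}+q^{k-2}+1\geq q^n+q^{n-1}+1$. Up to that point you match the paper. But you stop at exactly the step the argument needs: producing a $(q+1)$-secant. You flag this as ``the essential difficulty'' and offer only two speculative strategies, neither carried out, so your proposal does not actually establish the (conditional) statement.

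The paper closes this gap by citation rather than by a new argument: \cite[Corollary 5.2]{peter1} already proves that for a blocking set whose maximum field of linearity is $\F_{p^e}$ there exist (many) lines meeting it in exactly $p^e+1$ points. In other words, for blocking sets the hypothesis ``maximum field of linearity $\F_q$'' is \emph{equivalent} to the hypothesis ``there is a $(q+1)$-secant'' used in Theorem \ref{hoofd2}, and the equivalence is a known theorem of Sziklai, not an open problem. So the missing ingredient in your proof is not a genuine obstruction but a result you were unaware of; your first proposed route (no $(q+1)$-secant forces a larger field of linearity) is essentially what that corollary delivers. A minor secondary point: the definition of maximum field of linearity rules out ``$1$ bmod $p^{e+1}$'' intersections, not ``$1$ bmod $q^2=p^{2e}$'' as you wrote, so your statement that some secant has size $qm+1$ with $q\nmid m$ does not follow directly from the definition as given; it requires the finer structure of intersection numbers of blocking sets, which again comes from \cite{peter1}.
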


The notion ``maximum field of linearity'' is used by Sziklai to indicate the following: the maximum field of linearity of a blocking set in $\PG(2,p^t)$ is $\F_{p^e}$ if and only if every line meets the blocking set in $1$ mod $p^e$ points, but not every line meets in $1$ mod $p^{e+1}$ points. The fact that $e$ is a divisor of $t$, and hence, that there is a subfield $\F_{p^e}$ of $\F_{p^t}$ follows from his work on blocking sets, but it does not necessarily hold for linear sets in general (see Remark \ref{ambetant}).


\begin{remark} In Theorem \ref{hoofd2}, we proved that an $\F_q$-linear set of rank $k$ that contains a $(q+1)$-secant, contains at least $q^{k-1}+q^{k-2}+1$ points. It is clear if an $\F_q$-linear set contains a $(q+1)$-secant, then the maximum field of linearity is indeed $\F_q$. In \cite[Corollary 5.2]{peter1}, the author also shows the converse for blocking sets with respect to $k$-spaces in $\PG(r-1,q^n)$: if the maximum field of linearity is $\F_{p^e}$, then there are (many) $(p^e+1)$-secants to the set. This observation shows that assuming that there is $(q+1)$-secant in the case of an $\F_q$-linear blocking set in $\PG(2,q^n)$, is equivalent to assuming that the maximum field of linearity is $\F_q$. So we see that if the linearity conjecture for blocking sets holds, then Theorem \ref{hoofd2} proves Conjecture \ref{conjecturemin}.
\end{remark}

\begin{remark} We know that every linear set $L_U$ can be written as a linear set $L_{U'}$ that contains at least one point of weight $1$. However, in Theorem \ref{hoofd2}, we cannot replace the condition ``there is a $(q+1)$-secant'' with the condition ``containing a point of weight $1$'' which we used in Theorem \ref{hoofd}. For example, a subplane $\PG(2,q^2)$ of $\PG(2,q^4)$ can be written as $\B(\mu)$ where $\mu$ is a $4$-space in $\PG(11,q)$ that meets a certain $7$-dimensional space spanned by elements of $\S$ in a $3$-space $\pi$ which intersects every element of $\S$ in a line. We see that $\B(\mu)$ has $q^4+q^2+1<q^4+q^3+1$ elements, and does contain $q^4$ points of weight one. Note that in this case, the maximum field of linearity is $\F_{q^2}$.
\end{remark}

\begin{remark}\label{ambetant}
Note that, for general sets, from the condition ``every line meets a set in $1$ mod $p^e$ points, but not every line meets in $1$ mod $p^{e+1}$ points'' does not need to follow that $e$ is a divisor of $t$, and hence, that $\F_{p^t}$ has a subfield $\F_{p^e}$.
For an example of this behaviour, consider $L$ to be a subline $\PG(1,q^3)$ in $\PG(1,q^9)$. By field reduction, $L$ corresponds to a set $T$ of $q^3+1$ elements of a Desarguesian $8$-spread $\S$ in $\Pi=\PG(17,q)$ such that there is a $5$-dimensional space $\mu$ of $\PG(17,q)$ meeting each element of $T$ in a plane (the $q^3+1$ planes form a Desarguesian subspread of $\mu$). Now let $\mu'$ be a hyperplane of $\mu$, then $\B(\mu')$ consists of the $q^3+1$ elements of $T$; there is one element of $T$ that meets $\mu'$ in a plane, and all other elements of $T$ meet $\mu'$ in a line. Now embed $\Pi$ in $\PG(26,q)$ and extend the Desarguesian spread $\S$ in $\Pi$ to a Desarguesian $8$-spread in $\PG(26,q)$. Take $\pi$ to be a $5$-space that meets $\Pi$ in $\mu'$. Then $\B(\pi)$ is an $\F_q$-linear set in $\PG(2,q^9)$ of rank $6$ which has size $q^5+q^3+1<q^5+q^4+1$. Moreover, a line through two points of $\B(\pi)$ meets $\B(\pi)$ in $1$ mod $q^2$ points, and there are $(1+q^2)$-secants to this set. However, $2$ is not a divisor of $9$. Note that in this case, $\B(\mu')$ has only points of weight at least $2$. 
\end{remark}

\begin{remark} \label{allesmeer} Remark \ref{ambetant} leads us to a crucial point for a possible extension of Theorem \ref{hoofd2} to general dimension, without having to impose heavy conditions on the point set as in Theorem \ref{hoofd3}. Namely, it would be useful to deduce whether or not the following holds:  if an $\F_q$-linear set of rank $k$ $L_U$ has only points of weight at least $2$, is it then true that $L_U$ is an $\F_{q^i}$-linear set for some $i>1$? It follows from \cite{BBB} that this statement is true for $\F_q$-linear sets of rank $n$ in $\PG(1,q^n)$.
\end{remark}

If we impose the assumption that there is a hyperplane of $\PG(r-1,q^n)$ that meets the linear set in $\frac{q^{r-1}-1}{q-1}$ points that span this hyperplane, then it is clear that we can repeat the argument of Theorem \ref{hoofd2} and, by induction, obtain the following Theorem:

\begin{theorem} \label{hoofd3} Let $L$ be an $\F_q$-linear set of rank $k$ spanning $\PG(r-1,q^n)$ (hence $k\geq r$) such that there is at least one hyperplane $\Pi$ of $\PG(r-1,q^n)$ meeting $L$ in exactly $\frac{q^{r-1}-1}{q-1}$ points which span $\Pi$, then $L$ contains at least $q^{k-1}+q^{k-2}+\ldots+q^{k-r+1}+1$ points.
\end{theorem}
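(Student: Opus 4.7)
The plan is to proceed by induction on $r$, with base case $r=3$ supplied by Theorem \ref{hoofd2}---the spanning condition on the $q+1$ intersection points is automatic, since any two distinct collinear points already span the line they share. So I would assume the theorem for rank-$(k-1)$ linear sets in $\PG(r-2,q^n)$. Write $L=\B(\pi)$ with $\pi\subset\PG(rn-1,q)$ of projective dimension $k-1$, let $\widetilde{\Pi}$ be the $((r-1)n-1)$-subspace corresponding to the secant hyperplane $\Pi$, and set $\pi_0=\pi\cap\widetilde{\Pi}$.

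The first step is a structural analysis of $\pi_0$. Since $\B(\pi_0)=L\cap\Pi$ realises the minimum size $\frac{q^{r-1}-1}{q-1}$ of a linear set spanning $\Pi$, the subspace $\pi_0$ must be $(r-2)$-dimensional and meet each of the corresponding spread elements in a single point. The critical observation I would extract is that every point of $L\cap\Pi$ already has weight one in the \emph{full} $L$: for $p\in\pi_0$ the spread element $\B(p)$ is contained in $\widetilde{\Pi}$, so $\pi\cap\B(p)\subseteq\pi_0\cap\B(p)=\{p\}$. This supplies the weight-one points from which to project.

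Next I would pick $p_1\in\pi_0$ and a hyperplane $M$ of $\PG(r-1,q^n)$ with $\B(p_1)\notin M$, chosen generically enough that $\widetilde{M}\cap\B(p_1)=\emptyset$, and project $\pi$ from $p_1$ onto $\widetilde{M}$ to obtain a $(k-2)$-space $\mu$. Then $\B(\mu)$ is a rank-$(k-1)$ linear set spanning $M\cong\PG(r-2,q^n)$, and I would apply the inductive hypothesis using $\Pi\cap M$ as the new minimum-secant hyperplane: the projection $\mu_0$ of $\pi_0$ is $(r-3)$-dimensional, the identity $\mu\cap\widetilde{\Pi\cap M}=\mu_0$ follows from the same ``spread element inside $\widetilde{\Pi}$'' containment argument, and hence $\B(\mu)\cap(\Pi\cap M)$ consists of exactly $\frac{q^{r-2}-1}{q-1}$ points spanning $\Pi\cap M$. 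Induction then gives $|\B(\mu)|\geq q^{k-2}+\cdots+q^{k-r+1}+1$.

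The remainder of the argument would repeat the end of Theorem \ref{hoofd2}. Setting $\pi'=\langle p_1,\mu\rangle$, a weight-sum computation---using that $\B(p_1)$ has weight one in $\B(\pi')$, that points of $\B(\mu)$ retain their weights in $\B(\pi')$, and that every remaining point of $\B(\pi')$ is forced to have weight one because every line of the cone $\pi'$ either contains $p_1$ or lies in $\mu$---yields $|\B(\pi')|=q^{k-1}+|\B(\mu)|$. Finally $|\B(\pi)|\geq|\B(\pi')|$ would follow line by line through $\B(p_1)$: for any such line $N$, if $\nu=\pi\cap\widetilde{N}$ has projective dimension $r$, then $\nu'=\pi'\cap\widetilde{N}$ has the same dimension, Theorem \ref{hoofd} applied to $\B(\nu)$ (using $\B(p_1)$ of weight one in $L$, hence in $\B(\nu)$) gives $|\B(\nu)|\geq q^{r}+1$, while $\nu'$ has the shape of Proposition \ref{vbtrace}, forcing $|\B(\nu')|=q^{r}+1$. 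The delicate step---the main obstacle---will be the verification that $(\B(\mu),\Pi\cap M)$ actually fulfils the inductive hypothesis: that $\B(\mu)$ spans $M$, that $\Pi\cap M$ meets $\B(\mu)$ in exactly $\frac{q^{r-2}-1}{q-1}$ points, and that those points span $\Pi\cap M$. Once this incidence bookkeeping is carried out, the rest is a direct lift of the $r=3$ argument.
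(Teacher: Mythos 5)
Your proposal follows the same route as the paper, which justifies Theorem \ref{hoofd3} only by the one-line remark that one can ``repeat the argument of Theorem \ref{hoofd2}'' and induct: you project from a weight-one point of the distinguished hyperplane section onto a complementary hyperplane, apply the inductive hypothesis to the projected set, and then compare $\B(\pi)$ with $\B(\pi')$ line by line through the centre of projection, exactly as in the $r=3$ case. Your writeup is in fact more detailed than the paper's own, and correctly isolates the one step that genuinely needs an argument rather than an assertion, namely that $\pi\cap\widetilde{\Pi}$ is an $(r-2)$-space meeting each relevant spread element in a single point (the higher-dimensional analogue of the parenthetical claim in the proof of Theorem \ref{hoofd2} that the $q+1$ points of $T\cap L$ are exactly those of $\B(p_1p_2)$), so that the projected configuration again satisfies the hypotheses.
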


Of course, ideally, we would like to obtain a lower bound for $\F_q$-linear sets of rank $k$ which span $\PG(r-1,q^n)$, where this condition is removed and replace by a different condition.

\begin{remark} In the case $r=3$, the imposed condition for Theorem \ref{hoofd2} is that there is one line meeting the linear set in an $\F_q$-subline. It is not clear to the authors whether it is possible to have an $\F_q$-linear set in $\PG(2,q^n)$ such that every line meets it in $1$ mod $q$ or $0$ points, but not in $1$ mod $q^2$ or $0$ points, which does not admit a $(q+1)$-secant.  As said before, it follows from the work of \cite[Corollary 5.2]{peter1} for blocking sets that it is only possible for this situation to occur for $\F_q$-linear sets of rank $k<n$.
\end{remark}

\section*{Acknowledgement}

Jan De Beule acknowledges the Research Foundation Flanders (Belgium) (FWO) for a travel grant (grant number K202718N).

\noindent Jan De Beule\\
Vrije Universiteit Brussel\\
Department of Mathematics\\
Pleinlaan 2\\
B--1050 Brussel\\
Belgium\\
jan@debeule.eu\\

\noindent Geertrui Van de Voorde\\
University of Canterbury\\
School of Mathematics and Statistics\\
Private Bag 4800\\
8140 Christchurch\\
New Zealand\\
geertrui.vandevoorde@canterbury.ac.nz


\begin{thebibliography}{1}

\bibitem{Ball2003}
S.~Ball.
\newblock The number of directions determined by a function over a finite
  field.
\newblock {\em J. Combin. Theory Ser. A}, 104(2):341--350, 2003.

\bibitem{BBB}
A.~Blokhuis, S.~Ball, A.~E. Brouwer, L.~Storme, and T.~Sz\H{o}nyi.
\newblock On the number of slopes of the graph of a function defined on a
  finite field.
\newblock {\em J. Combin. Theory Ser. A}, 86(1):187--196, 1999.

\bibitem{olga2}
G.~Bonoli and O.~Polverino.
\newblock {$\Bbb F_q$}-linear blocking sets in {${\rm PG}(2,q^4)$}.
\newblock {\em Innov. Incidence Geom.}, 2:35--56, 2005.

\bibitem{peter}
Sz.~L. Fancsali, P.~Sziklai, and M.~Tak\'ats.
\newblock The number of directions determined by less than {$q$} points.
\newblock {\em J. Algebraic Combin.}, 37(1):27--37, 2013.

\bibitem{FQ11}
M.~Lavrauw and G.~Van~de Voorde.
\newblock On linear sets on a projective line.
\newblock {\em Des. Codes Cryptogr.}, 56(2-3):89--104, 2010.

\bibitem{lidl}
R.~Lidl and H.~Niederreiter.
\newblock {\em Finite fields}, volume~20 of {\em Encyclopedia of Mathematics
  and its Applications}.
\newblock Addison-Wesley Publishing Company, Advanced Book Program, Reading,
  MA, 1983.
\newblock With a foreword by P. M. Cohn.

\bibitem{ore}
O.~Ore.
\newblock On a special class of polynomials.
\newblock {\em Trans. Amer. Math. Soc.}, 35(3):559--584, 1933.

\bibitem{olga}
O.~Polverino.
\newblock Linear sets in finite projective spaces.
\newblock {\em Discrete Math.}, 310(22):3096--3107, 2010.

\bibitem{peter1}
P.~Sziklai.
\newblock On small blocking sets and their linearity.
\newblock {\em J. Combin. Theory Ser. A}, 115(7):1167--1182, 2008.

\end{thebibliography}
\end{document}